\documentclass[11pt]{article}
\usepackage{amssymb,amsthm,amsmath,mathrsfs}
\usepackage{prettyref,cite}
\usepackage{hyperref}
\usepackage{graphicx}
\usepackage [a4paper, footskip=1cm, headheight = 16pt, top=2cm, bottom=2.5cm,  right=2cm,  left=2cm ]{geometry}
\newcommand\pref[1]{\prettyref{#1}}
\newrefformat{chp}{\chaptername~\ref{#1}}
\newrefformat{sec}{Section~\ref{#1}}
\newrefformat{thm}{Theorem~\ref{#1}}
\newrefformat{lem}{Lemma~\ref{#1}}
\newrefformat{cor}{Corollary~\ref{#1}}
\newrefformat{con}{Conjecture~\ref{#1}}
\newrefformat{prop}{Proposition~\ref{#1}}
\newrefformat{dfn}{Definition~\ref{#1}}
\newrefformat{exm}{Example~\ref{#1}}
\newrefformat{rem}{Remark~\ref{#1}}
\newrefformat{fig}{\figurename~\ref{#1}}
\newrefformat{tbl}{\tablename~\ref{#1}}
\newrefformat{eq}{$($\ref{#1}$)$}

\newtheorem{pro}{Proposition}
\newtheorem{cor}[pro]{Corollary}

\newtheorem{thm}[pro]{Theorem}
\newtheorem{precon}[pro]{Conjecture}
\newtheorem{predef}[pro]{Definition}
\newtheorem{prerem}[pro]{Remark}
\newtheorem{alphthm}{Theorem}
\renewcommand{\thepro}{\arabic{pro}}

\newenvironment{rem}{\begin{prerem}\rm}{\hfill $\blacktriangle$\end{prerem}}

\newenvironment{exm}
{\par\medskip\refstepcounter{pro}\noindent{\bf Example \thepro\ }}
{\par\hfill $\square$\par\medskip\noindent}

\DeclareMathOperator{\lbc}{lbc}
\DeclareMathOperator{\bc}{bc}
\DeclareMathOperator{\cc}{cc}
\DeclareMathOperator{\kc}{lcc}
\DeclareMathOperator{\lr}{lr}
\newcommand{\lcc}[1]{\kc(#1)}
\newcommand{\wt}[1]{\tilde{\omega}(#1)}
\newcommand{\rep}[1]{\mathfrak{#1}}
\newcommand{\power}[1]{\mathcal{P}(#1)}
\newcommand{\bq}{\begin{equation}}
\newcommand{\eq}{\end{equation}}
\begin{document}
\title{Local Clique Covering of Graphs}
\author{Ramin Javadi, Zeinab Maleki and Behanz Omoomi \\[4mm]
Department of Mathematical Sciences\\
Isfahan University of Technology, 84156-83111, Isfahan, Iran}
\date{}
\maketitle
\begin{abstract}
A \textit{$k-$clique covering} of a simple graph $G$, is an edge covering of $G$ by its cliques such that each vertex is contained in at most $k$ cliques. The smallest $k$ for which $G$ admits a $k-$clique covering is called \textit{local clique cover number} of $G$ and is denoted by $\lcc G$. Local clique cover number can be viewed as the local counterpart of the clique cover number which is equal to the minimum total number of cliques covering all edges. In this paper, several aspects of the problem are studied and its relationships to other well-known problems are discussed. Moreover, the local clique cover number of claw-free graphs and its subclasses are notably investigated. In particular, it is proved that local clique cover number of every claw-free graph is at most $c\Delta/\log \Delta$, where $\Delta$ is the maximum degree of the graph and $c$ is a universal constant. It is also shown that the bound is tight, up to a constant factor. Furthermore, it is established that local clique number of the linear interval graphs is bounded by $\log\Delta+1/2 \log\log\Delta+O(1)$.  Finally, as a by-product, a new Bollob\'as-type inequality is obtained for the intersecting pairs of set systems.
\end{abstract}
\section{Introduction}\label{sec:intro}
Throughout the paper, all graphs are finite and simple (unless it is clearly mentioned) and the term \textit{clique} stands for  both  a set of pairwise adjacent vertices and also the corresponding induced complete subgraph. In addition, by a \textit{biclique} we mean a complete bipartite subgraph. 
In the literature, different variants of edge covering of graphs have been explored. Among them, the clique covering and biclique  covering are widely studied. 
A \textit{clique $($resp. biclique$)$ covering} of the graph $G$ is a family $\mathcal{C}$ of cliques (resp. bicliques) of $G$ such that  every edge of $G$ belongs to at least one clique (resp. biclique) in $\mathcal{C}$.
The  \textit{clique} (\textit{resp. biclique}) \textit{cover number} of $G$, denoted by $\cc(G)$ (resp. $\bc(G)$), is defined as the smallest number of cliques (resp. bicliques) in a clique covering of $G$. These concepts turn out to have several  relations and applications to a large variety of theoretical and  applied  problems including set intersection representations of graphs, communication complexity of boolean functions and encryption key management. For a review of the  clique and biclique covering see~\cite{jukna1, jukna2,pullman}.

In contrast to the clique covering problem which is aimed at minimizing the ``total'' number of cliques comprising a clique covering, in this paper, we are interested in minimizing the maximum number of cliques which are incident with ``each vertex''. Let us make the notion more accurate. 
Given a clique covering $\mathcal{C}$  of a graph $G=(V,E)$, for every vertex $x\in V$, the \textit{valency} of $x$ (with respect to $\mathcal{C}$), denoted by $v_\mathcal{C}(x)$, is defined to be the number of cliques inside $\mathcal{C}$ containing $x$. The \textit{valency} of the clique covering $\mathcal{C}$ is the maximum valency of all vertices of $G$ with respect to $\mathcal{C}$. The clique covering $\mathcal{C}$ is called a \textit{$k-$clique covering} if its valency is at most $k$, i.e. every vertex of $G$ belongs to at most $k$  cliques within  $\mathcal{C}$. Among all clique coverings of $G$, we are interested in finding a clique covering of the minimum valency. 
The smallest number $k$ being the valency of a clique covering of $G$, is called the \textit {local clique cover number} of $G$ and is denoted by $\lcc{G}$. In fact,
\[\lcc G:=\min_{\mathcal{C}} \max_{x\in V} v_{\mathcal{C}}(x),\]
where the minimum is taken over all clique coverings of $G$.

In other words, $\lcc {G}$ is the minimum number $k$ for which $G$ admits a $k-$clique covering. The concept of $k-$clique covering has been introduced in \cite{skums}, during the study of the edge intersection graphs of linear hypergraphs (see Section~\ref{sec:rel}). The problem of finding the local clique cover number of a graph appears to have a number of interesting interconnections and interpretations to some other well-known problems. We will discuss these relationships in Section~\ref{sec:rel}.

In this paper, the main effort is devoted to investigating  the local clique cover number of ``claw-free'' graphs. A \textit{claw-free graph} is a graph having no complete bipartite graph $K_{1,3}$ as an induced subgraph. Also, a \textit{quasi-line graph} is a graph where the neighbours of each vertex are union of two cliques. Quasi-line graphs may be considered as the generalization of line graphs as well as claw-free graphs as the generalization of quasi-line graphs. This has been a natural and recently well-studied question that which properties of line graphs can be extended to quasi-line graphs and then to all claw-free graphs (see e.g.~\cite{seymour,seymourcol}). Our motivation for picking the class of claw-free graphs to study is twofold. Firstly, we will see in Corollary~\ref{cor:line} that the local clique cover number of line graphs is at most~$2$. This arises the natural question that how large the local clique cover number of a quasi-line graph and a claw-free graph can be. We will answer this question in Section~\ref{sec:clawfree}.

Secondly, the local clique cover number being increasing in terms of the induced subgraph partial ordering,  
motivates us to define the following parameter related to $G$,
\[\alpha_l(G):=\max \{ t \ :\ K_{1,t}\mbox{ is an induced subgraph of }G \}.\]
An \textit{independent set} is a subset of mutually non-adjacent vertices. In fact,  $\alpha_l(G)$ is the size of the maximum independent set within the neighbourhood of a vertex. Then, in some sense, the parameter $\alpha_l(G)$ can be thought of as the \textit{maximum local independence number} of $G$. If $K_{1,t}$ is an induced subgraph of $G$, then $t=\lcc{K_{1,t}}\leq \lcc{G}$ and thereby, $\alpha_l(G)\leq \lcc{G}$. Though this bound can be tight, e.g. for the cases $\lcc G=\Delta(G)$ (see Proposition~\ref{pro:delta}), it can also be very loose. For instance, let $G_t=K_{2,2,\ldots,2}$ be the $t-$partite complete graph. Then $\alpha_l(G_t)=2$, however, we will see that $\lcc{G_t}> (1/2) \log t$ (see the note following Proposition~\ref{pro:kneser}). This arises the question that for every fixed $t$, how large the lcc of a graph $G$ can be, whenever $\alpha_l(G)\leq t$. In fact, investigating the lcc of claw-free graphs (as the graphs $G$ with $\alpha_l(G)\leq 2$) can be perceived as the first step towards answering the question for general $t$.

It should be noted that, on the same line of thought, Dong et al. \cite{dong} have proposed the local counterpart of the biclique cover number. The \textit{local biclique cover number} of a graph $G$, denoted by $\lbc(G)$, is defined as the smallest $k$ for which $G$ admit a $k-$biclique covering, i.e. a biclique covering where each vertex is incident with at most $k$ of the bicliques comprising the covering.

In Section~\ref{sec:lig}, the local biclique cover number will be applied to compute the lcc of the linear interval graphs. For this reason, let us recall an algebraic interpretation of biclique coverings and introduce an analogous interpretation for the local variant.
Orlin \cite{orlin} has presented an interpretation of biclique covering of bipartite graphs, using boolean rank. The \textit{rank} of a matrix $A$, denoted by $r(A)$, is defined as the smallest $k$, for which there exist vectors $x_i, y_i$, $1\leq i\leq k$, satisfying
\begin{equation} \label{eq:rank}
A=\sum_{i=1}^k x_i y_i^T.
\end{equation}
Let $A$ be a binary matrix. If we confine ourself to binary vectors $x_i,y_i$ and use the boolean arithmetic (i.e. everything as usual except $1+1=1$), then the minimum $k$ for which \eqref{eq:rank} holds, is called the \textit{boolean rank} of $A$ and is denoted by $r_B(A)$. It is easy to see that the boolean rank of $A$, $r_B(A)$, is the smallest $k$ for which one can cover all one entries of $A$ by $k$ all-ones submatrices \cite{watts}. We call an all-ones submatrix of $A$, as a \textit{rectangle} of $A$. Now, assume that $A$ is the bipartite adjacency matrix of a bipartite graph $G=(X,Y)$ (i.e. $A$ is an $|X|\times |Y|$ matrix where $A_{xy}$ is equal to $1$ if and only if $x\in X$ is adjacent to $y\in Y$). In this case, a rectangle in $A$ corresponds to a biclique in $G$. As a result, $r_B(A)$ turns out to be equal to the minimum number of bicliques covering all edges of $G$, i.e. $r_B(A)=\bc(G)$ \cite{watts,orlin}.

In the same vein, one might give an analogous interpretation for the local biclique cover number. For a binary matrix $A$, we define \textit{local boolean rank} of $A$, denoted by $\lr_B(A)$, as the smallest $k$ such that one can cover all  one entries of $A$ by all-ones submatrices (rectangles), in a way that each row and column of $A$ appears  in at most $k$ rectangles. Now if $A$ is the bipartite adjacency matrix of a bipartite graph $G$, then the preceding argument implies that $\lr_B(A)=\lbc(G)$. We will use the local boolean rank in Section~\ref{sec:lig}.

Now, let us give an overview of the organization of forthcoming sections. In Section~\ref{sec:rel}, we review the relationships between the local clique cover number and three well-known problems, namely, line hypergraphs, intersection representation and Kneser representation. In Section~\ref{sec:basic}, we derive some basic bounds for the lcc in terms of the maximum degree and the maximum clique number. In addition, we characterize all the graphs $G$ where $\lcc G=\Delta(G)$.
Section~\ref{sec:clawfree} is set up to compute lcc of the claw-free graphs. In particular, we prove that lcc of a claw-free graph is at most $c\Delta/\log\Delta$, where $\Delta$ is maximum degree of the graph and $c$ is a constant. Moreover, we prove that the bound is the best possible upper bound up to a constant factor. In Section~\ref{sec:lig}, a special class of claw-free graphs, namely linear interval graphs, is considered and particularly, it is proved that lcc of a linear interval graph is at most $\log\Delta+1/2 \log\log\Delta+O(1)$. Finally, in Section~\ref{sec:bol}, a result of Section~\ref{sec:lig} is deployed to prove a new bollob\'as-type inequality for a pair of set systems.
\section{Interactions and interpretations}\label{sec:rel}
As we mentioned before, the local clique cover number may be interpreted as a variety of different invariants of the graph and the problem relates to a number of other well-known problems. In the following, we provide an overview of three important problems associated to the local clique cover number, namely, line hypergraphs, intersection representation and Kneser representation of graphs.
\vspace{-2mm}
\paragraph{Line hypergraphs.}
Given a hypergraph $H=(V,\mathcal{F})$, the \textit{line graph} or \textit{edge intersection graph} of $H$, denoted by $L(H)$, is a simple graph whose vertices correspond to the edges of $H$ and  a pair of vertices in $L(H)$ are adjacent if and only if their corresponding edges in $H$ intersect.
For an arbitrary graph $G$, the inverse image $L^{-1}(G)$ is the set of all hypergraphs $H$ where $L(H)=G$.  In \cite{berg89}, the concept has been described in terms of clique covering. For this, let $\mathcal{C}$ be a clique covering for the graph $G$ and for each vertex $x\in V(G)$, let $\mathcal{C}_x$ be the set of all cliques in $\mathcal{C}$ which contain $x$. It has been proved that an arbitrary graph $G$ is a line graph of a hypergraph $H$ if and only if there is a clique covering $\mathcal{C}$ for $G$ such that $H=(\mathcal{C}, \{\mathcal{C}_x\ :\  x\in V(G)\})$ \cite{berg89}. 
From this, one may deduce that every simple graph is the line graph of a hypergraph. Also, the problem of recognizing a class of line hypergraphs is reduced to investigating the clique covering of graphs.

A hypergraph $H$ is called  \textit{$k-$uniform} if all its edges have the same cardinality $k$. The class of line graphs of $k-$uniform hypergraphs is denoted by $L_k$. Note that, in every $k-$clique covering, one may make the valency of all vertices equal to $k$ by adding some dummy single-vertex cliques. Thus, one may see that a graph $G$ belongs to the class $L_k$ if and only if it admits a $k-$clique covering (see \cite{skums}, for more details). Hence, the following equality holds,
\begin{equation}\label{eq:hyper}
\lcc{G}=\min\{k \ :\  G\in L_k\}.
\end{equation}
Consequently, the problem ``Is $\lcc G\leq k$?'' reduces to ``Is $G\in L_k$?'', i.e. ``Does there exist a $k-$uniform hypergraph whose line graph is isomorphic to $G$?''. It is clear that $\lcc{G}=1$ if and only if $G$ is a disjoint union of cliques.  Also, by \eqref{eq:hyper}, we have the following corollary.
\begin{cor}\label{cor:line}
For every graph $G$, $\lcc{G}\leq 2$ if and only if $G$ is the line graph of a multigraph.
\end{cor}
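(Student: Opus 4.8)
The plan is to read the statement off directly from the equality \eqref{eq:hyper}, once we observe that the class $L_2$ is precisely the class of line graphs of multigraphs. The only genuine content is to identify a $2$-uniform hypergraph with a multigraph and to handle the passage from ``valency at most $2$'' to ``valency exactly $2$''; everything else is a specialization of \eqref{eq:hyper} to $k=2$.

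First I would record the monotonicity $L_1\subseteq L_2$. By the remark preceding \eqref{eq:hyper}, given a $k$-clique covering $\mathcal{C}$ one may raise the valency of every vertex to exactly $k$ by adjoining dummy single-vertex cliques, without covering any new edge; hence $G\in L_k$ whenever $G$ admits a clique covering of valency at most $k$, and in particular every member of $L_1$ lies in $L_2$. Combined with \eqref{eq:hyper} this yields the equivalence $\lcc{G}\le 2 \iff G\in L_2$: if $\lcc{G}\le 2$ then $\min\{k : G\in L_k\}\le 2$, so $G\in L_1$ or $G\in L_2$, whence $G\in L_2$; conversely $G\in L_2$ forces $\lcc{G}\le 2$ by \eqref{eq:hyper}.

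It then remains to see that $L_2$ is exactly the class of line graphs of multigraphs. A $2$-uniform hypergraph is one all of whose edges have cardinality $2$, and, allowing the edge family to contain repeated pairs, this is literally the same object as a multigraph (parallel edges correspond to repeated $2$-element edges). To be fully careful I would run this through Berge's description quoted in the text: a $2$-clique covering $\mathcal{C}$ of $G$ with every valency equal to $2$ yields $H=(\mathcal{C},\{\mathcal{C}_x : x\in V(G)\})$ in which each edge $\mathcal{C}_x$ has size $2$, so $H$ is a $2$-uniform hypergraph (a multigraph on vertex set $\mathcal{C}$) with $L(H)=G$; conversely, any multigraph $M$ presents each of its vertices as a clique, namely the set of edges incident with that vertex, and these cliques form a $2$-clique covering of $L(M)$.

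The step I expect to need the most care is the bookkeeping in the padding argument, namely ensuring that every vertex---including isolated vertices of $G$, which have valency $0$, and vertices lying in a single clique---can be brought to valency exactly $2$ by dummy singleton cliques while keeping the covering valid and producing a genuine (multi)graph $H$; for an isolated vertex this requires two distinct singleton cliques on the same vertex, which is permissible since $\mathcal{C}$ is taken as a family rather than a set. Apart from this edge-case bookkeeping, the corollary is an immediate specialization of \eqref{eq:hyper}.
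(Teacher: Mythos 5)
Your proposal is correct and follows essentially the same route as the paper, which derives the corollary directly from the equality \eqref{eq:hyper} together with the preceding observation that $G\in L_k$ if and only if $G$ admits a $k$-clique covering (via padding with dummy single-vertex cliques) and the identification of $2$-uniform hypergraphs with multigraphs. Your additional bookkeeping---treating $\mathcal{C}$ as a family so that isolated vertices can receive two singleton cliques, and verifying Berge's correspondence in both directions---is exactly the detail the paper leaves implicit, handled correctly.
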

The class $L_2$ turns out to have a characterization by a list of $7$ forbidden induced subgraphs and a polynomial time algorithm has been found for the recognition of $G\in L_2$ (see \cite{bermond,omoomi,lehot}). In contrast to the case $k=2$, the situation of the case $k\geq 3$ is completely different. Lov{\'a}sz in~\cite{lovasz} has proved that there is no characterization of the class $L_3$ by a finite list of forbidden induced subgraphs. Also, it has been proved that the decision problems $G \in   L_k$ for fixed $k \geq 4$ and the problem of recognizing line graphs of $3-$uniform hypergraphs without multiple edges are $NP$-complete  \cite{poljak}. This leads us to the following hardness results for lcc.
\begin{cor}\label{l3}
\begin{itemize}
\item[\rm (i)] The decision problem $\lcc{G} \leq 2$ is polynomially solvable.
\item[\rm (ii)] For every fixed $k\geq 4$, the decision problem $\lcc{G}\leq k$ is an $NP$-complete problem.
\item[\rm (iii)] The decision problem that if there exists a $3-$clique covering for $G$, where no two distinct vertices appear in exactly the same set of cliques, is an $NP$-complete problem.
\end{itemize} 
\end{cor}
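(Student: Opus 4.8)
The plan is to reduce each part to the corresponding recognition problem for line graphs of uniform hypergraphs, exploiting the identity \eqref{eq:hyper} together with the evident monotonicity of the classes $L_k$. First I would record that a $j$-clique covering is automatically a $k$-clique covering whenever $j\leq k$ (the valency constraint only weakens), so that $L_j\subseteq L_k$; combined with \eqref{eq:hyper} this yields the clean equivalence $\lcc{G}\leq k$ if and only if $G\in L_k$. Each of the three statements then becomes a statement about membership in some class $L_k$ (or a refinement of it), and the cited complexity results can be invoked directly. A uniform preliminary observation that I would make once and reuse is that any $k$-clique covering has at most $k|V(G)|$ nonempty cliques (sum the valencies over all vertices), so the covering itself is a polynomial-size certificate whose validity—edge coverage together with the valency bound—can be checked in polynomial time; this settles membership in $NP$ for parts (ii) and (iii).

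For (i), the above equivalence gives $\lcc{G}\leq 2$ if and only if $G\in L_2$, and by \pref{cor:line} this holds exactly when $G$ is the line graph of a multigraph. Since $L_2$ has a characterization by finitely many forbidden induced subgraphs and a polynomial-time recognition algorithm \cite{bermond,omoomi,lehot}, part (i) follows at once. For (ii), the same equivalence reduces the question $\lcc{G}\leq k$ to $G\in L_k$, which is $NP$-complete for every fixed $k\geq 4$ by \cite{poljak}; as these are literally the same family of yes-instances, hardness transfers directly, and membership in $NP$ was dealt with above.

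The part I expect to require the most care is (iii), where the extra condition ``no two distinct vertices appear in exactly the same set of cliques'' must be matched with the hypergraph being \emph{simple}, i.e.\ free of multiple edges. Under the Berge correspondence $H=(\mathcal{C},\{\mathcal{C}_x : x\in V(G)\})$, the edge of $H$ associated with a vertex $x$ is precisely its set $\mathcal{C}_x$ of incident cliques, so two vertices produce parallel edges exactly when $\mathcal{C}_x=\mathcal{C}_y$; thus the forbidden configuration is precisely a repeated edge of $H$. A $3$-uniform hypergraph moreover demands valency exactly $3$, so to pass from a $3$-clique covering of valency \emph{at most} $3$ to a genuine $3$-uniform hypergraph I would pad each vertex $x$ with $3-v_{\mathcal{C}}(x)$ dummy single-vertex cliques $\{x\}$. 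The delicate point is that this padding must neither alter $L(H)$ nor destroy distinctness: a single-vertex clique lies in only one edge $\mathcal{C}_x$, so it creates no new intersections among the edges, whence $L(H)=G$ is preserved; and because each dummy clique is unique to its vertex, padding can only \emph{separate} clique-sets, so a covering with distinct clique-sets yields a simple $3$-uniform hypergraph, and conversely any such hypergraph restricts to a $3$-clique covering with distinct clique-sets.

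This identifies the decision problem in (iii) with recognizing line graphs of $3$-uniform hypergraphs without multiple edges, which is $NP$-complete by \cite{poljak}; together with the membership-in-$NP$ argument above, this completes (iii). The main obstacle throughout is thus not the hardness itself—which is borrowed wholesale from the hypergraph recognition literature—but verifying that the two bookkeeping manipulations (monotone padding to enforce uniformity, and the clique-set/edge dictionary enforcing simplicity) are faithful in both directions.
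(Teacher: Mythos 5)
Your proposal is correct and follows exactly the route the paper intends: the paper states Corollary~\ref{l3} without a written proof, as an immediate consequence of the equivalence \eqref{eq:hyper} (``$\lcc{G}\leq k$ iff $G\in L_k$'', via the dummy single-vertex-clique padding already noted in the text) together with the cited recognition results for $L_2$ \cite{bermond,omoomi,lehot} and the $NP$-completeness results of \cite{poljak}. Your additional bookkeeping---the $NP$-membership certificate, and the verification that the distinct-clique-sets condition in (iii) corresponds precisely to the hypergraph being free of multiple edges under the Berge correspondence---is sound and simply makes explicit what the paper leaves implicit.
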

\noindent Also note that $NP$-completeness of the decision problem $\lcc{G} \leq 3$, in general case, remains open.
\paragraph{Intersection representation.}
An \textit{intersection representation} for graph $G=(V,E)$ is a representation of each vertex by a set, such that every two distinct vertices are adjacent if and only if their corresponding sets intersect. In other words, it is a function $\rep{R}:V\to \power{L}$, where $L$ is a set of labels, such that for every two distinct vertices $x,y\in V$,
\[x\sim y\ \mbox{ if and only if }\ \rep{R}(x)\cap \rep{R}(y)\neq \emptyset.\]
For each $i\in L$, the vertices being represented by the sets containing $i$ form a clique in $G$. On the other hand, every clique covering $\mathcal{C}$ induces an intersection representation which assigns to each vertex $x$ the set $\mathcal{C}_x$ (see above).
This sets up a one-to-one correspondence between the clique coverings of $G$ and the intersection representations for $G$ (see e.g. \cite{mckee}, for more details). A \textit{$k-$representation} is an intersection representation $\rep{R}$ such that for each $x\in V$, $\rep{R}(x)$ is of size at most $k$. As a matter of fact, one may conclude that $\lcc G$ is the minimum $k$ for which $G$ admits a $k-$representation. Indeed,
\begin{equation}
\lcc{G}=\min_{\rep{R}} \max_{x\in V} |\rep{R}(x)|.
\end{equation}
\paragraph{Kneser representation.}
Given positive integers $n$ and $k$, $n\geq 2k$, the Kneser graph with paremeters $n,k$, denoted by $KG(n,k)$, is the graph with the vertex set $[n]^k$, the set of all $k-$subsets of $[n]:=\{1,\ldots,n\}$, such that a pair of vertices are adjacent if and only if the corresponding subsets are disjoint. It can be seen that every graph is an induced subgraph of a Kneser graph \cite{kneser}. Hamburger et al. have proposed the question that what is the smallest $k$ for which $G$ is the induced subgraph of a Kneser graph $KG(n,k)$, for some integer $n$. The minimum $k$ for which there exists some integer $n$ such that $G$ is the induced subgraph of $KG(n,k)$  is called the \textit{Kneser index} of $G$ and is denoted by $\iota^K(G)$ \cite{kneser}.

Let us assume that $G$ admits a $k-$representation $\rep{R}$ which is \textit{injective}. Then, one can add dummy new labels to the sets $\rep{R}(x)$, $x\in V$, in order to make all of them of the same size $k$. This shows that each vertex can be represented by a distinct $k-$subset of $L$, where the sets corresponding to adjacent vertices intersect. Hence, the compliment of $G$, $\overline{G}$, is an induced subgraph of $KG(n,k)$. This leads us to the fact that an injective $k-$representation for $G$ exists if and only if  $\iota^K(\overline{G}) \leq k$. Not all $k-$representations of $G$ are injective, however, if $G$ admits a $k-$representation, then one may find an injective $(k+1)-$representation for $G$ by adding to each set, a single new label.

A pair of adjacent vertices $x,y\in V$, are called \textit{twins}, if $N(x)\setminus \{y\}=N(y)\setminus \{x\}$, where $N(x)$ stands for the set of neighbours of $x$. A graph $G$ is called \textit{twin-free} if it has no twins. Every intersection representation of a twin-free graph is indeed injective, because distinct vertices should be represented by distinct sets of labels. 

The above arguments imply the following proposition relating the lcc to the Kneser index.
\begin{pro}\label{pro:kneser}
For every graph $G$, we have $\lcc{G} \leq \iota^K(\overline{G}) \leq \lcc{G}+1$ and provided $G$ is twin-free, $\lcc{G}=\iota^K(\overline{G})$.
\end{pro}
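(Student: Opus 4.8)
The statement is an assembly of facts already established in the discussion above, so the plan is to isolate those facts and combine them, keeping track of exactly where the additive unit of slack enters. First I would record the three ingredients I intend to use. (i) $\lcc{G}$ is the least $k$ for which $G$ carries a $k$-representation $\rep{R}\colon V\to\power{L}$ with $|\rep{R}(x)|\le k$ for all $x$. (ii) The Kneser dictionary: $G$ carries an \emph{injective} $k$-representation if and only if $\iota^K(\overline{G})\le k$, because an injective assignment of distinct $k$-sets to the vertices is precisely an induced embedding of $\overline{G}$ into some $KG(n,k)$ (two vertices are adjacent in $G$ exactly when their sets meet, hence adjacent in $\overline{G}$ exactly when their sets are disjoint). (iii) Any $k$-representation can be made injective at the cost of one extra label per vertex, by adjoining to each set a private new symbol that is distinct across vertices.

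For the lower bound $\lcc{G}\le\iota^K(\overline{G})$, I would put $k=\iota^K(\overline{G})$, use (ii) to extract an injective $k$-representation, and note that this is in particular a $k$-representation, whence $\lcc{G}\le k$ by (i). For the upper bound $\iota^K(\overline{G})\le\lcc{G}+1$, I would begin with an optimal $k$-representation where $k=\lcc{G}$ and injectivize it by (iii): adjoining a private symbol to each set leaves every cross-intersection—and therefore every adjacency—unchanged while forcing the sets to be pairwise distinct, producing an injective $(k+1)$-representation; then (ii) gives $\iota^K(\overline{G})\le k+1$.

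Finally, for the twin-free refinement I would argue that the slack vanishes because injectivization is free. If two distinct vertices $x,y$ shared the same nonempty set under an intersection representation, then for every third vertex $z$ we would have $x\sim z\iff y\sim z$, and moreover $x\sim y$ since their common set is nonempty; hence $N(x)\setminus\{y\}=N(y)\setminus\{x\}$, making $x,y$ twins and contradicting twin-freeness. Thus every $k$-representation of a twin-free graph is already injective, so an optimal one yields $\iota^K(\overline{G})\le\lcc{G}$ via (ii), which combined with the lower bound forces equality.

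I expect no genuine obstacle, since the work is done in (ii) and (iii) above; the only points demanding care are the orientation of the Kneser convention (\emph{disjoint} sets correspond to \emph{non}-adjacency in $G$) and the verification that the private-label trick reproduces the represented graph exactly rather than creating or destroying edges. The lone degenerate case—vertices assigned the empty set when $k\ge 1$—is harmless: such a vertex can be given a unique singleton without altering any adjacency, so I would either normalize empty sets away at the start or simply observe that they induce no coincidences among distinct vertices.
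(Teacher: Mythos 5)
Your proof is correct and follows essentially the same route as the paper, whose ``proof'' is precisely the discussion preceding the proposition: the characterization of $\lcc{G}$ via $k$-representations, the dictionary between injective $k$-representations of $G$ and induced embeddings of $\overline{G}$ into a Kneser graph $KG(n,k)$ (after padding sets to size exactly $k$ with dummy labels), the private-label injectivization giving the $+1$, and the observation that twin-freeness forces every representation to be injective. Your explicit treatment of the degenerate empty-set case (two isolated vertices could share the empty set without being twins, since twins are required to be adjacent) is a point the paper glosses over, and your fix by unique singletons handles it correctly.
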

For instance, it is shown in \cite{kneser} that if the graph $G$ contains a subgraph induced by a matching of size $t$, and $t>\binom{2k}{k}$, for some $k$, then $\iota^K(G)> k$. This implies that $\iota^K(G)>(1/2) \log t$. As a consequent, by Proposition~\ref{pro:kneser},  if $G_t=K_{2,2,\ldots,2}$ is the $t-$partite complete graph, we have $\lcc{G_t}>(1/2) \log t$.

\begin{rem}\label{rem:twinfree}
Every graph $G$ has a twin-free induced subgraph $H$ for which $\lcc G=\lcc H$. To see this, note that being twins endows an equivalence relation on $V(G)$. Let $H$ be the induced subgraph of $G$  obtained by deleting all but one vertex from each of the equivalence classes. It is evident that $\lcc H\leq \lcc G$. On the other hand, every $k-$clique covering for $H$ can be extended to a $k-$clique covering for $G$ substituting every vertex by its corresponding equivalence class. Hence, $\lcc G=\lcc H$, as desired.
\end{rem}
\section{Basic bounds}\label{sec:basic}
In this section, we provide simple lower and upper bounds for $\lcc{G}$ in terms of the maximum degree and the maximum clique number. In addition, we characterize the case when the upper bound meets.
\begin{pro} \label{pro:bounds}
For every graph $G$ with maximum degree $\Delta$ and maximum clique number $\omega$, we have
\bq\label{eq:bound}
\frac{\Delta}{\omega-1} \leq \lcc{G} \leq \Delta.
\eq
\end{pro}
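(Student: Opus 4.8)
The plan is to prove the two inequalities in \eqref{eq:bound} separately, using the intersection-representation viewpoint developed in Section~\ref{sec:rel}, since a clique covering of valency $k$ is the same thing as a $k$-representation $\rep{R}:V\to\power{L}$ assigning to each vertex a label set of size at most $k$, where two vertices are adjacent precisely when their label sets meet.

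For the upper bound $\lcc{G}\le\Delta$, the plan is to exhibit an explicit clique covering of valency at most $\Delta$. The simplest choice is to cover each edge by itself: take $\mathcal{C}=E(G)$, the family of all single-edge cliques $\{x,y\}$ for $xy\in E$. This is clearly a clique covering, and each vertex $x$ lies in exactly $\deg(x)\le\Delta$ of these cliques, so $v_\mathcal{C}(x)\le\Delta$ for every $x$; hence $\lcc{G}\le\Delta$. I would remark that this covering is wasteful whenever $G$ has large cliques, which is exactly what the lower bound below quantifies.

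For the lower bound $\Delta/(\omega-1)\le\lcc{G}$, the plan is to fix an optimal clique covering $\mathcal{C}$ with valency $k=\lcc{G}$ and to pick a vertex $x$ of maximum degree $\Delta$. Every edge incident to $x$ must be covered by some clique of $\mathcal{C}$ containing $x$, and there are exactly $v_\mathcal{C}(x)\le k$ such cliques, say $C_1,\dots,C_{v_\mathcal{C}(x)}$. Each $C_i$ is a clique of size at most $\omega$, so it contains at most $\omega-1$ neighbours of $x$ (all vertices of $C_i$ other than $x$ itself). Since the neighbourhood $N(x)$ has $\Delta$ vertices and every neighbour $y$ of $x$ lies in some $C_i$ (because the edge $xy$ is covered by a clique through $x$), we get the covering count $\Delta = |N(x)| \le \sum_{i=1}^{v_\mathcal{C}(x)} |C_i\setminus\{x\}| \le (\omega-1)\,v_\mathcal{C}(x) \le (\omega-1)\,k$. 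Rearranging yields $k\ge\Delta/(\omega-1)$, which is the desired bound.

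I do not expect a genuine obstacle here, as both inequalities are elementary counting arguments; the only point requiring a little care is the lower bound, where one must be sure that every neighbour of $x$ is genuinely accounted for by a clique \emph{through $x$} (which holds because the edge $xy$ needs covering and any clique covering $xy$ contains both endpoints) and that the $\omega-1$ bound per clique is applied correctly after removing $x$ itself. One should also note the degenerate case $\omega=1$, i.e. $G$ edgeless, where the lower bound is vacuous (or interpreted as $0$) and both sides are trivially handled.
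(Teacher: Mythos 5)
Your proof is correct and follows essentially the same route as the paper: the edge set $E(G)$ as an explicit $\Delta$-clique covering for the upper bound, and the counting estimate $\deg(x)\le\sum_{C\in\mathcal{C}_x}|C\setminus\{x\}|\le(\omega-1)\,\lcc{G}$ at a vertex for the lower bound. The only cosmetic difference is that you fix a vertex of maximum degree at the outset, whereas the paper bounds $\deg(x)$ for an arbitrary vertex $x$ and then takes the maximum; your remark about the degenerate case $\omega=1$ is a harmless extra precaution the paper omits.
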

\begin{proof}{
The upper bound is a straightforward result of the fact that all edges of $G$ comprise a $\Delta$-clique covering. For the lower bound, let $k=\lcc G$ and $\mathcal{C}$ be a $k-$clique covering. Fix a vertex $x$ and define
$\mathcal{C}_x:=\{C\in \mathcal{C}\ :\  x\in C\}$. By the definition of lcc, we have $|\mathcal{C}_x|\leq k$. Furthermore, each edge incident with $x$ is contained in some clique $C\in \mathcal{C}_x$. Therefore
\[\deg(x)\leq \sum_{C\in \mathcal{C}_x} |C\setminus \{x\}|\leq |\mathcal{C}_x|(\omega-1)\leq k (\omega-1)=\lcc G (\omega-1).\]
Being the vertex $x$ arbitrary, the desired bound follows.
}\end{proof}
Using the above proposition, we may determine the exact value of $\lcc{G}$ for triangle-free graphs.
\begin{cor}
For every triangle free graph $G$ with maximum degree $\Delta$, we have $\lcc{G}=\Delta$.
\end{cor}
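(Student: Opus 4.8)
The plan is to apply Proposition~\ref{pro:bounds} directly, since the claim follows almost immediately from the two inequalities in \eqref{eq:bound} once we observe what it means for a graph to be triangle-free. The key observation is that a triangle-free graph $G$ has maximum clique number $\omega = 2$: any edge is a clique of size $2$, and no three mutually adjacent vertices can exist (that would be a triangle), so no clique can have size exceeding $2$. Of course one should handle the degenerate case where $G$ has no edges at all separately, but under the standing assumption that $\Delta \geq 1$ (equivalently $G$ has at least one edge) we have $\omega = 2$.

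With $\omega = 2$ in hand, the lower bound in \eqref{eq:bound} reads $\Delta/(\omega - 1) = \Delta/1 = \Delta \leq \lcc{G}$, while the upper bound gives $\lcc{G} \leq \Delta$. Combining these two inequalities forces $\lcc{G} = \Delta$, which is exactly the desired conclusion. So the entire argument reduces to plugging $\omega = 2$ into the already-established bounds.

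There is essentially no obstacle here: this corollary is a pure specialization of the preceding proposition, and the only content is the elementary fact that triangle-free is equivalent to $\omega \leq 2$. The single point warranting a moment's care is the boundary behaviour at $\Delta = 0$, where $G$ is edgeless; in that case $\lcc{G} = 0 = \Delta$ holds trivially (an empty clique covering suffices), so the statement remains correct without needing the $\omega = 2$ substitution. For the interesting range $\Delta \geq 1$ the substitution applies and the equality is forced, completing the proof.
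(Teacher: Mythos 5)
Your proposal is correct and is exactly the argument the paper intends: the corollary is stated as an immediate specialization of Proposition~\ref{pro:bounds}, obtained by noting that a triangle-free graph with at least one edge has $\omega=2$, so both bounds in \eqref{eq:bound} collapse to $\lcc{G}=\Delta$. Your extra remark on the edgeless case $\Delta=0$ is a harmless refinement the paper leaves implicit.
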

The following proposition characterize all the graphs for which $\lcc{G}=\Delta$.
\begin{pro} \label{pro:delta}
For the graph $G$ with maximum degree $\Delta$, we have $\lcc{G}=\Delta$ if and only if there exists a vertex $x\in V(G)$ of degree $\Delta$, such that $N(x)$ is an independent set, that is $\alpha_l(G)=\Delta$.
\end{pro}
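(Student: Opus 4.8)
The plan is to prove both directions of the biconditional, the backward direction being immediate and the forward direction being the substantive one. For the backward direction, suppose there is a vertex $x$ of degree $\Delta$ whose neighbourhood $N(x)$ is independent. Then the induced subgraph on $\{x\}\cup N(x)$ is precisely the star $K_{1,\Delta}$, so $\alpha_l(G)=\Delta$. Since lcc is monotone under taking induced subgraphs and $\lcc{K_{1,\Delta}}=\Delta$, we get $\Delta=\lcc{K_{1,\Delta}}\le\lcc{G}\le\Delta$ by the upper bound in Proposition~\ref{pro:bounds}, forcing equality. This reuses exactly the argument already sketched in the introduction for the bound $\alpha_l(G)\le\lcc{G}$.

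For the forward direction I would argue the contrapositive: assuming that every vertex of degree $\Delta$ has a neighbourhood containing at least one edge, I would construct a clique covering of valency strictly less than $\Delta$, i.e.\ a $(\Delta-1)$-clique covering. The idea is to start from the trivial $\Delta$-clique covering consisting of all edges of $G$ (each edge is a clique $K_2$) and then locally improve it at the ``bad'' vertices. Concretely, for a vertex $x$ of full degree $\Delta$, the hypothesis guarantees two neighbours $u,v\in N(x)$ with $uv\in E(G)$, so $\{x,u,v\}$ is a triangle. Replacing the two edges $xu$ and $xv$ by the single triangle $\{x,u,v\}$ reduces the number of cliques through $x$ from $\Delta$ to $\Delta-1$ while not increasing the valency at $u$ or $v$ (each of them trades one incident edge-clique for the shared triangle, so their valencies do not go up). Carrying out such a replacement at every vertex whose current valency equals $\Delta$ should drive the global valency down to $\Delta-1$.

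The main obstacle will be ensuring that these local replacements can be performed \emph{simultaneously and consistently}, so that fixing one vertex does not create a new valency-$\Delta$ vertex elsewhere, and so that vertices of degree $\Delta$ that share a triangle are handled coherently. I would make this precise by processing vertices one at a time and maintaining the invariant that no vertex ever has valency exceeding $\Delta$, while the set of vertices of valency exactly $\Delta$ strictly shrinks. The delicate point is that when I merge $xu,xv$ into a triangle for a bad vertex $x$, the neighbour $u$ might itself be a degree-$\Delta$ vertex that I still need to fix; I must check that the triangle I introduced for $x$ can be reused, or is at worst neutral, when I later repair $u$. Since each merge strictly decreases $v_\mathcal{C}(x)$ for the processed vertex and is non-increasing for its two partners, a careful accounting (for instance, a potential-function or induction-on-the-number-of-bad-vertices argument) should close the gap and establish $\lcc{G}\le\Delta-1$, completing the contrapositive and hence the proposition.
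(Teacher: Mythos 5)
Your backward direction is correct and in substance the same as the paper's (the paper argues directly that the $\Delta$ edges at a vertex $x$ with independent neighbourhood require $\Delta$ distinct cliques; your induced-subgraph monotonicity argument gives the same conclusion). The gap is in the forward direction: your iterative repair scheme stops exactly where the real work lies, and as literally described it can stall. After you fix a bad vertex $x'$ by merging the edge-cliques $x'u$ and $x'v$ into the triangle $\{x',u,v\}$, the neighbour $u$ may still have valency $\Delta$, but the cliques through $u$ are no longer all edge-cliques: they are the triangle $\{x',u,v\}$, the now-redundant edge-clique $\{u,v\}$, and the remaining edges at $u$. If the only edge inside $N(u)$ is $x'v$ (which is indeed an edge, since $\{x',u,v\}$ is a triangle), then no two surviving edge-cliques at $u$ have adjacent endpoints, so your only move --- ``merge two edge-cliques into a triangle'' --- is unavailable; you would need an additional operation, e.g.\ deleting the clique $\{u,v\}$ because it is subsumed by the triangle. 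More importantly, you never establish that the process terminates with every valency at most $\Delta-1$: the sentence ``a careful accounting \dots should close the gap'' is an announcement of the needed argument, not the argument itself.

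The paper closes exactly this gap with an extremal choice that makes all the bookkeeping automatic: among all $\Delta$-clique coverings, take $\mathcal{C}$ minimizing $\sum_{C\in\mathcal{C}}|C|$, and let $x$ be a vertex of valency $\Delta$ (one exists since $\lcc{G}=\Delta$). Minimality then forces $\deg(x)=\Delta$ (otherwise $x$ could be dropped from superfluous cliques), forces each clique through $x$ to cover only one edge at $x$, and finally forces $N(x)$ to be independent, since adjacent $y,z\in N(x)$ would permit the swap $\{x,y\},\{x,z\}\to\{x,y,z\}$, which keeps the covering a $\Delta$-covering (the valencies of $y$ and $z$ are unchanged, that of $x$ decreases) while strictly decreasing $\sum_{C\in\mathcal{C}}|C|$. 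Note that this is precisely the potential-function idea you gesture at, with the total size of the covering as the potential, so your route is recoverable; but to complete your proposal you must either supply the extra local moves and a terminating invariant for the sequential procedure, or, as the paper does, replace the iteration by a single minimization.
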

\begin{proof}

Suppose that there exists a vertex $x\in V(G)$ of degree $\Delta$, such that $N(x)$ is an independent set. Therefore, in every clique covering of $G$,  we need $\Delta$ cliques to cover the edges incident with $x$. Thus, $\lcc G\geq \Delta$.

Conversely, assume that $\lcc{G}=\Delta$. Let $\mathcal{C}$ be a $\Delta-$covering and, subject to this, $\sum_{C\in\mathcal{C}}|C|$ is minimal. Also, let $x$ be a vertex which is contained in $\Delta$ cliques of $\mathcal{C}$. As a matter of fact, $\deg(x)= \Delta$. Otherwise, the edges incident with $x$ can be covered by at most $\Delta-1$ cliques in $\mathcal{C}$. By excluding $x$ from the extra cliques, we obtain a new $\Delta-$covering, contradicting the minimality assumption. The same argument shows that each clique in $\mathcal{C}$ covers at most one edge incident with $x$. Now, it is enough to prove that $N(x)$ is an independent set. Assume, to the contrary, that $y,z\in N(x)$ are adjacent. In this case, one may replace the cliques $\{x,y\},\{x,z\}\in \mathcal{C}$ by the clique $\{x,y,z\}$ to obtain a new $\Delta-$covering, contradicting the minimality assumption. Hence, the assertion holds.
\end{proof}
%

\section{Claw-free graphs} \label{sec:clawfree}
In this section, we focus on the class of claw-free graphs and particularly we concentrate on the question that how large the lcc of a claw-free graph can be (see Section~\ref{sec:intro} for the origins and motivations). In the light of Propositions~\ref{pro:bounds} and \ref{pro:delta}, one can see that the best upper bound for the lcc of a general graph is $\Delta$. Nevertheless, we show that this bound can be asymptotically improved for the claw-free graphs. In this regard, for every integer $k$,  let us define
\[f(k):=\max\{\lcc{G} \ :\  G \mbox{ is claw-free and } \Delta(G)\leq k\}.\]
Now the question is that how  the function $f(k)$ behaves  in terms of $k$. The same question can be also asked for the quasi-line graphs. Let us define
\[g(k):=\max\{\lcc{G} \ :\  G \mbox{ is a quasi-line graph and } \Delta(G)\leq k\}.\]
In the following, we  determine asymptotic behaviors of the functions $f(k)$ and $g(k)$, by proving that for some constants $c_1,c_2$,
\begin{equation}\label{eq:claw}
c_1\ \frac{k}{\log k}\leq g(k)\leq f(k)\leq c_2\ \frac{k}{\log k}.
\end{equation}
The whole of this section is devoted to establish \eqref{eq:claw} (here, we make no attempt to find the best possible constant factors). In order to prove the lower bound, for every integer $k$, one ought to provide a quasi-line graph $G$ where $\Delta(G)\leq k$ and $\lcc{G}\geq c_1\ k/\log k$. This is exactly what we are going to do in the following theorem.
A graph is called \textit{cobipartite} if its complement is bipartite. It is evident that every cobipartite graph is a quasi-line graph.
\begin{thm}
For every integer $n$, there exists a cobipartite graph $G$ on $n$ vertices 
 such that
$$\lcc{G}> \frac{1}{4}(1-o(1)) \frac{n}{\log n},$$
where $o(1)$ tends to $0$, as $n$ goes to infinity.
\end{thm}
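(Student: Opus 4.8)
The plan is to argue non-constructively, by a first-moment counting comparison, rather than by exhibiting an explicit graph: I will show that the number of labeled cobipartite graphs on $n$ vertices strictly exceeds the number of labeled graphs on $n$ vertices whose local clique cover number is at most $k$, as soon as $k$ drops just below $\tfrac14\,n/\log n$ (all logarithms base two, as elsewhere in the paper). A pigeonhole then forces the existence of a cobipartite $G$ with $\lcc{G}>k$, and the explicit threshold produces exactly the constant $1/4$.

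First I would lower bound the supply of cobipartite graphs. Fixing the balanced partition $V=A\cup B$ with $|A|=\lfloor n/2\rfloor$ and $|B|=\lceil n/2\rceil$, declaring $A$ and $B$ to be cliques, and choosing the edges between $A$ and $B$ arbitrarily, yields $2^{\lfloor n/2\rfloor\lceil n/2\rceil}\ge 2^{(n^2-1)/4}$ pairwise distinct labeled cobipartite graphs, since distinct cross-edge sets give distinct graphs.

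Next I would upper bound the number of graphs with $\lcc{G}\le k$ through the intersection-representation interpretation from Section~\ref{sec:rel}, namely $\lcc{G}=\min_{\rep{R}}\max_{x}|\rep{R}(x)|$. Every such graph admits a $k$-representation $\rep{R}:V\to\power{L}$. The crucial reduction is to control the label universe: a label contained in at most one of the sets $\rep{R}(x)$ contributes to no pairwise intersection, so deleting it changes neither the represented graph nor the validity of the representation; hence one may assume every surviving label lies in at least two sets, whence $|L|\le\tfrac12\sum_{x}|\rep{R}(x)|\le kn/2$. Relabeling $L$ inside $\{1,\dots,\lfloor kn/2\rfloor\}$ and recording, for each vertex, its (size-$\le k$) label set, I conclude that the number of labeled graphs with $\lcc{G}\le k$ is at most $\big[(k+1)\binom{\lfloor kn/2\rfloor}{k}\big]^{n}$.

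Finally I would compare the two estimates. Using $\binom{\lfloor kn/2\rfloor}{k}\le(en/2)^{k}$, the binary logarithm of the upper bound is at most $n\big[\log(k+1)+k\log(en/2)\big]=nk\log n\,(1+o(1))$ when $k=\Theta(n/\log n)$, while the lower bound carries the exponent $(n^2-1)/4$. Choosing $k=\big\lfloor\tfrac14(1-\varepsilon)\,n/\log n\big\rfloor$ makes $nk\log n<(n^2-1)/4$ for all large $n$, so the counting inequality is satisfied and some cobipartite $G$ has $\lcc{G}>k$; letting $\varepsilon\to0$ and folding every lower-order term into a single error gives $\lcc{G}>\tfrac14(1-o(1))\,n/\log n$. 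The only genuinely non-routine step, and the main obstacle, is the label-universe reduction forcing $|L|\le kn/2$: without it the count over representations is unbounded, and it is precisely this bound that pins down the matching constant $1/4$. The remainder is the elementary binomial estimate and the asymptotic bookkeeping needed to absorb the terms $nk\log(e/2)$ and $n\log(k+1)$ into the $o(1)$.
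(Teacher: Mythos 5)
Your proposal is correct and is essentially the paper's own proof: the paper likewise counts the $2^{n^2/4}$ cobipartite graphs (as complements of bipartite $n/2\times n/2$ graphs) against the number of $t$-representations over a bounded label universe and invokes pigeonhole at the threshold $t=\frac14(1-o(1))\,n/\log n$. The only difference is cosmetic: you bound the label universe by $|L|\le kn/2$ via the observation that each useful label lies in at least two sets, whereas the paper uses the slightly cruder $|L|\le n^2/4$ (one label per cross edge), and either bound is absorbed into the same $o(1)$.
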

\begin{proof}
For fixed  integers $n,t$, $t\leq {{n^2}/{8}}$,   let $G$ be a graph on $n$ vertices whose compliment is a bipartite $n/2\times n/2$ graph (i.e. $V(G)$ is the disjoint union of two cliques of size $n/2$). Also, let $\rep{R}:V(G)\to \power{L}$ be a $t$-representation for $G$ with the label set $L$ (see Section~\ref{sec:rel} for the definition). Then, without loss of generality, we can assume that $|L|\leq n^2/4$, because for each edge between two parts, we need at most one new label in $L$.

Now, on the one hand, the number of all $t$-representations with $n^2/4$ labels for a graph on $n$ vertices, is at most
\[\left[\sum_{i=0}^{t} \binom{n^2/4}{i} \right]^n\leq t^n \left( \frac{en^2}{4t}\right)^{nt},\]
and, on the other hand, the number of all bipartite $n/2\times n/2$ graphs  (with labelled vertices) is $2^{n^2/4}$. We set $t$ such that for sufficiently large $n$,
\begin{equation}\label{eq:ted}
2^{n^2/4} > t^n \left( \frac{en^2}{4t}\right)^{nt}.
\end{equation}
This ensures the existence of a cobipartite graph $G$ which admits no $t$-representation and consequently $\lcc{G}>t$. It only remains to do some tedious computations to check Inequality \eqref{eq:ted}, for $t:=1/4(1-o(1))\ n/\log n$.
\end{proof}
Proving the upper bound in \eqref{eq:claw} is more difficult.  For this purpose,  our approach is to establish an upper bound for the lcc of claw-free graphs, in terms of the maximum degree. In particular, we  prove that for every claw-free graph $G$ with maximum degree $\Delta$, $\lcc{G}\leq c\ \Delta/\log \Delta$, where $c$ is a universal constant. Towards achieving this objective, we deploy a result of  Erd{\H{o}}s et al. about the decomposition of a graph into complete bipartite graphs \cite{erdos}, along with a well-known result of Ajtai et al. in the independence number of triangle-free graphs. Let us recall them in the following two theorems.
\begin{alphthm} {\rm \cite{erdos}} \label{thm:erdos}
The edge set of every graph on $n$ vertices can be partitioned into complete bipartite subgraphs {\rm (}bicliques{\rm )} such that each vertex is contained in at most $c_0\ n/\log n$ of the bicliques, i.e. $\lbc(G)\leq c_0\ n/\log n$, where $c_0$ is a universal constant.
\end{alphthm}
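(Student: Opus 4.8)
Since Theorem~\ref{thm:erdos} is the local biclique partition theorem of Erd\H{o}s and Pyber, in the paper I would simply cite \cite{erdos}; nonetheless, here is the route I would follow to establish it. The organizing observation is a \emph{local edge-accounting} identity. Fix a partition $\mathcal{B}$ of $E(G)$ into bicliques and a vertex $v$; since the members of $\mathcal{B}$ containing $v$ partition the edges at $v$, and a biclique $A\cup C$ with $v\in A$ meets $v$ in exactly $|C|$ edges, the number of members of $\mathcal{B}$ through $v$ whose side opposite to $v$ has size at least $s$ is at most $\deg(v)/s\le n/s$. Choosing $s=\delta\log n$ for a small constant $\delta$, it therefore suffices to partition $E(G)$ into bicliques that are \emph{balanced}, in the sense that both parts have size at least $\delta\log n$: every vertex then lies in at most $n/(\delta\log n)=O(n/\log n)$ of them, because each such biclique absorbs at least $\delta\log n$ of its incident edges. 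This is the sense in which the problem is really about presenting each vertex with a logarithmically large opposite side, and it reduces the theorem to a statement about the \emph{existence} of balanced bicliques.

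The tool that produces them is the K\H{o}v\'ari--S\'os--Tur\'an (Zarankiewicz) threshold: a $K_{s,s}$-free graph on $N$ vertices has only $O(N^{2-1/s})$ edges, and since $N^{2-1/s}$ is a $2^{-\Theta(1/\delta)}$ fraction of $N^2$ when $s=\delta\log N$, choosing $\delta$ small shows that any graph whose edge density is bounded below by a constant must contain a complete bipartite subgraph with both sides of size $\Omega(\log N)$. The plan is then to extract such balanced bicliques greedily, removing their edges one biclique at a time. While the graph remains dense each extracted $K_{s,s}$ covers $s^2=\Theta(\log^2 n)$ edges using only $2s=\Theta(\log n)$ vertices, so the total vertex-usage $\sum_{B}|V(B)|$ is at most $2e(G)/(\delta\log n)=O(n^2/\log n)$; dividing by $n$ recovers the desired average of $O(n/\log n)$ bicliques per vertex, matching $\lbc(G)\le c_0 n/\log n$.

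The step I expect to be the genuine obstacle is turning this amortized, dense-regime estimate into the uniform per-vertex guarantee claimed for \emph{every} graph. Two difficulties must be reconciled. First, greedy extraction stalls once the remaining graph becomes $K_{\delta\log n,\delta\log n}$-free, and at that point it may still carry a constant fraction of the edges, so one cannot simply finish by taking leftover edges as singletons: that would charge a dense vertex up to $\Theta(n)$ times. One must instead let the biclique size $s$ shrink gracefully with the falling density (the threshold permits $K_{s,s}$ down to $e\approx \tfrac{1}{2}N^{2-1/s}$), or recurse on a balanced vertex bipartition $V=V_1\cup V_2$ and handle the bipartite graph between the halves by the same mechanism, controlling a recursion of the form $L(n)\le L(n/2)+O(n/\log n)$. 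Second, and more seriously, the accounting above bounds the \emph{average} per-vertex count, whereas the theorem asserts a bound on the \emph{maximum}; ensuring that no individual vertex is ever charged more than $O(n/\log n)$ times across all density scales simultaneously --- so that global density keeps forcing large bicliques to exist while the local load stays flat throughout --- is exactly the balance that the Erd\H{o}s--Pyber argument is designed to maintain, and it is where the universal but unspecified constant $c_0$ comes from. As a sanity check that $n/\log n$ is the right order and not an artifact, a random graph $G(n,1/2)$ has largest biclique of size $O(\log n)$, so any biclique partition uses $\Omega(n^2/\log^2 n)$ members and hence charges some vertex $\Omega(n/\log n)$ times.
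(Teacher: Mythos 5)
The paper does not prove Theorem~\ref{thm:erdos} at all: it is quoted from Erd\H{o}s and Pyber \cite{erdos} and used as a black box (in the proofs of Theorem~\ref{thm:claw}). So your decision to cite \cite{erdos} is precisely what the paper does, and on that level the proposal matches the source; the rest of your text is a reconstruction sketch, and it should be judged as such.

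Two concrete points about the sketch. First, your opening reduction is only a sufficient condition that cannot be realized: a partition of $E(G)$ into bicliques with \emph{both} sides of size at least $\delta\log n$ simply does not exist in general (a star $K_{1,n-1}$ contains no biclique with both sides of size $2$, and the $K_{s,s}$-free remainder left by your greedy phase is in the same position while possibly still holding a constant fraction of the edges and vertices of degree $\Theta(n)$). The correct accounting must tolerate bicliques whose side opposite a given vertex is small, provided each vertex meets few of them; as you say yourself, securing that uniformly --- a max bound, not the average bound your amortized count $\sum_B |V(B)| = O(n^2/\log n)$ delivers --- across all density scales is the actual content of the Erd\H{o}s--Pyber argument. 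Your proposal names this kernel honestly but does not supply it, so as a standalone proof it is incomplete (harmless here, since the paper only cites; your recursion $L(n)\le L(n/2)+O(n/\log n)$ does telescope to $O(n/\log n)$, but it merely relocates the same difficulty to the bipartite between-graph). Second, your sanity check is wrong as stated: the largest biclique of $G(n,1/2)$ is \emph{not} of size $O(\log n)$ --- stars $K_{1,\Theta(n)}$ are bicliques, and $K_{a,b}$ with $a\le (1-\epsilon)\log_2 n$ and $b$ polynomial in $n$ exist with high probability --- so a biclique partition need not have $\Omega(n^2/\log^2 n)$ members. What is true is that every biclique of $G(n,1/2)$ has \emph{smaller side} $O(\log n)$, hence $ab\le (a+b)\cdot O(\log n)$ for each member $K_{a,b}$; summing over a partition gives total vertex-incidences $\Omega(n^2/\log n)$, and therefore some vertex lies in $\Omega(n/\log n)$ bicliques, which rescues your conclusion that the order $n/\log n$ is tight (compare also the paper's own Theorem~8, which obtains the matching lower bound for a cobipartite graph by a counting argument over $t$-representations rather than via random graphs).
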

\begin{alphthm} {\rm \cite{ajtai80,ajtai81,shearer}} \label{thm:ajtai}
Every triangle-free graph on $n$ vertices contains an independent set of size $\sqrt{2}/2(1-o(1))\ \sqrt{n\log n}$.
\end{alphthm}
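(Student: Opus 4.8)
The plan is to derive the theorem from two complementary lower bounds on the independence number $\alpha(G)$ of a triangle-free graph $G=(V,E)$ on $n$ vertices, one sharp for small degrees and one sharp for large degrees, and then to balance them. For the easy bound, if $x$ has maximum degree $\Delta$ then $N(x)$ is an independent set---any edge inside $N(x)$ would close a triangle---so $\alpha(G)\geq\Delta\geq\bar d$, where $\bar d=2|E|/n$ is the average degree. The substantial bound is a degree-weighted estimate due to Shearer: for a triangle-free graph,
\[
\alpha(G)\ \geq\ \sum_{v\in V} f(d_v),\qquad f(d)=\frac{d\log d-d+1}{(d-1)^2}\ \sim\ \frac{\log d}{d},
\]
where $d_v$ is the degree of $v$. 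As $f$ is convex, Jensen's inequality collapses this to a bound in the average degree alone, $\alpha(G)\geq n\,f(\bar d)$, with $f(\bar d)\sim\log\bar d/\bar d$.

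Granting these, the theorem is pure optimization in the single parameter $\bar d$. Put $T=\sqrt{(n\log n)/2}$. If $\bar d\geq T$, the easy bound gives $\alpha(G)\geq\bar d\geq T=\tfrac{\sqrt2}{2}\sqrt{n\log n}$. If $\bar d<T$, then (for $\bar d>e$) $f$ is decreasing, so $\alpha(G)\geq n\,f(\bar d)\geq n\,f(T)=(1-o(1))\,n\log T/T$; since $\log T=\tfrac12\log n\,(1+o(1))$ this equals $\tfrac{\sqrt2}{2}(1-o(1))\sqrt{n\log n}$, while the case $\bar d\leq e$ is harmless, as it forces $\alpha(G)=\Omega(n)$. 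In every case $\alpha(G)\geq\tfrac{\sqrt2}{2}(1-o(1))\sqrt{n\log n}$, and the two regimes cross precisely at $\bar d^2=n\log n/2$, which explains both the threshold $T$ and the constant $\sqrt2/2$.

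The hard part, and the only place triangle-freeness is genuinely used, is Shearer's weighted inequality, since one must beat the Caro--Wei bound $\alpha(G)\geq\sum_v 1/(d_v+1)$ (which is worth only $\Omega(\sqrt n)$ in the worst case and misses the decisive $\sqrt{\log n}$ factor). I would prove it by a randomized/inductive argument: order the vertices by independent uniform weights in $[0,1]$ and take the local minima as a first independent set, where $\Pr[v\in I]=1/(d_v+1)$ reproduces Caro--Wei; the gain comes from conditioning on the weights within a neighborhood $N(v)$ and exploiting that, by triangle-freeness, $N(v)$ spans no edges, so its weights are genuinely independent and one can profitably reassign vertices that are beaten by only a single, itself heavily contested, neighbor. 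Equivalently, one argues by induction on $|V|$, deleting a vertex together with its (independent) neighborhood and forcing $f$ to satisfy the resulting self-referential averaging inequality. The delicate points---which I expect to be the real obstacle---are pinning down the optimal weight $f$ so that this local inequality holds for every degree simultaneously, and verifying the convexity of $f$ needed for the Jensen step; these are exactly the computations carried out in the cited works of Ajtai et al.\ and Shearer.
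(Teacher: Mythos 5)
The paper does not actually prove this statement: Theorem~B is imported verbatim from the cited works of Ajtai--Koml\'os--Szemer\'edi and Shearer and is used downstream as a black box (its only role is to feed the coloring result, Theorem~C). So there is no internal proof to compare yours against; what can be judged is whether your reconstruction of the literature is sound, and it is. The reduction is the standard one and your bookkeeping checks out: in a triangle-free graph the neighborhood of a maximum-degree vertex is independent, so $\alpha(G)\geq \Delta\geq \bar d$; Shearer's bound gives $\alpha(G)\geq n f(\bar d)$ with $f(d)\sim (\log d)/d$; and balancing the two regimes at $T$ with $T^2=n\log T$, i.e. $T=(1+o(1))\sqrt{(n\log n)/2}$, produces exactly the constant $\sqrt{2}/2$ --- provided $\log$ is read as the natural logarithm, which is the convention under which Shearer's $f(d)=(d\log d-d+1)/(d-1)^2$ and the stated constant are mutually consistent (elsewhere the paper's $\log$ is base $2$, but that only perturbs the statement, not your argument). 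Your regime analysis is also correct: $f$ decreasing where you use it, Jensen applied in the right direction for convex $f$, and $\bar d\leq e$ disposed of by Caro--Wei since $\alpha=\Omega(n)$ there. Two remarks. First, the degree-weighted form plus Jensen is a detour you can skip: Shearer's 1983 theorem is stated directly in terms of the average degree, $\alpha(G)\geq nf(\bar d)$; the convexity of $f$ is what makes his induction close, not a post-hoc averaging step. Second, the one genuinely hard ingredient --- Shearer's inequality itself --- you only sketch (the randomized-ordering/induction heuristics are plausible but not a proof) and explicitly defer to the cited papers. As a self-contained proof that is a real gap, sitting exactly where the theorem's difficulty lives, as you yourself note; but since the paper under review also takes that inequality purely on citation, your proposal matches, and in the balancing computation slightly exceeds, the paper's own treatment of this statement.
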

Since the property of being triangle-free is hereditary with respect to subgraph, one can iteratively apply Theorem \ref{thm:ajtai} and omit the large independent sets from the vertex set, thereby obtaining a proper coloring for the graph (see e.g. \cite{kim,nilli}). Therefore, the following theorem is a consequent of Theorem~\ref{thm:ajtai}.
\begin{alphthm} \label{thm:color}
The vertex set of every triangle-free graph on $n$ vertices can be properly colored by at most $2\sqrt{2}(1+o(1))\sqrt{n/\log n}$ colors, such that each color class is of size at most $\sqrt{2}/2 \sqrt{n\log n}$.
\end{alphthm}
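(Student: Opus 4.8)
The plan is to construct the coloring greedily by repeatedly peeling off large independent sets, exploiting the fact noted before the statement that an induced subgraph of a triangle-free graph is again triangle-free. Put $\beta:=\sqrt{2}/2$. Starting from $G$, I would repeatedly apply \pref{thm:ajtai} to the subgraph induced on the not-yet-colored vertices: if $m$ vertices remain, that subgraph is triangle-free and so contains an independent set of size at least $\beta(1-o(1))\sqrt{m\log m}$. I assign this set a fresh color and delete it, first trimming it to at most $\lfloor\beta\sqrt{n\log n}\rfloor$ vertices if it happens to be larger. Since $m\le n$ throughout, every color class has size at most $\beta\sqrt{n\log n}=\tfrac{\sqrt2}{2}\sqrt{n\log n}$, which is exactly the required size bound; and because the cap $\beta\sqrt{n\log n}$ is never smaller than the Ajtai guarantee $\beta\sqrt{m\log m}$, each step still deletes at least $\beta(1-o(1))\sqrt{m\log m}$ vertices.

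Next I would fix a cutoff, say $m_1:=\sqrt n/\log n$, run the peeling process only while at least $m_1$ vertices remain (the \emph{main phase}), and color the fewer than $m_1$ leftover vertices one color each. The leftover phase uses at most $m_1=o(\sqrt{n/\log n})$ colors, so it affects neither the leading constant nor the size bound. Because $m_1\to\infty$, the error term in \pref{thm:ajtai} is uniformly $o(1)$ over the whole main phase, so I may write $s_i\ge \beta(1-\varepsilon_n)\sqrt{m^{(i)}\log m^{(i)}}$ for the number $s_i$ of vertices removed at step $i$, where $m^{(i)}$ denotes the count before that step and $\varepsilon_n\to0$.

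The heart of the argument is to count the number $T$ of colors used in the main phase, and here the sharp constant forces a genuine integral estimate. Writing each unit as $1=s_i/s_i$ and using the monotonicity of $m\mapsto(m\log m)^{-1/2}$ to compare the sum with an integral gives
\[
T\le\frac{1}{\beta(1-\varepsilon_n)}\sum_i\frac{s_i}{\sqrt{m^{(i)}\log m^{(i)}}}\le\frac{1}{\beta(1-\varepsilon_n)}\int_{m_1}^{n}\frac{dm}{\sqrt{m\log m}},
\]
the middle sum telescoping into the integral because the integrand dominates its value at the right endpoint of each interval $[m^{(i+1)},m^{(i)}]$ (the small discrepancy between $m_1$ and the true final count contributes only a negligible $O(1)$). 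The substitution $m=u^2$ converts the integral into $\sqrt2\int_{\sqrt{m_1}}^{\sqrt n}(\log u)^{-1/2}\,du$, and the standard asymptotic $\int^{U}(\log u)^{-1/2}\,du\sim U/\sqrt{\log U}$ evaluates it to $(1+o(1))\,2\sqrt{n/\log n}$, the contribution from the lower limit being of order $n^{1/4}$ and hence negligible. Since $1/\beta=\sqrt2$, this yields $T\le 2\sqrt2(1+o(1))\sqrt{n/\log n}$, and adding the $o(\sqrt{n/\log n})$ colors of the leftover phase gives the claimed bound.

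I expect the main obstacle to be precisely this last estimate. A crude dyadic bucketing of the phase (removing independent sets while $m$ lies in $[n/2^{j+1},n/2^{j}]$) produces the weaker constant $2+\sqrt2$ rather than $2\sqrt2$, so extracting the stated constant genuinely requires the continuous comparison above, together with a cutoff $m_1$ chosen to be simultaneously large enough that the Ajtai error is uniformly $o(1)$ and small enough that both the leftover phase and the lower end of the integral are negligible.
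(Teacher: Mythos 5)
Your proposal is correct and takes essentially the same route as the paper, which proves this statement by exactly the iterative peeling you describe: repeatedly applying Theorem~\ref{thm:ajtai} to the triangle-free subgraph induced on the uncolored vertices (the paper only sketches this, citing the standard references, while you supply the details --- trimming each class to the cap $\frac{\sqrt{2}}{2}\sqrt{n\log n}$, the cutoff making the Ajtai error term uniform, and the sum-to-integral comparison that produces the constant $2\sqrt{2}$). Your computations check out, so nothing further is needed.
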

 Now, with all these results in hand, we are ready to prove the main theorem of this section.
\begin{thm}\label{thm:claw}
For every claw-free graph $G$ with maximum degree $\Delta$, we have $\lcc{G}\leq c\  {\Delta}/{\log \Delta}$, where $c$ is a universal constant.
\end{thm}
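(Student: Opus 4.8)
The plan is to exploit the one structural consequence of claw-freeness that tames neighborhoods: for every vertex $x$ the set $N(x)$ contains no independent triple, so $\alpha(G[N(x)])\le 2$ and hence the complement $\overline{G[N(x)]}$ is triangle-free on at most $\Delta$ vertices. Applying Theorem~\ref{thm:color} to this complement, I would partition each neighborhood $N(x)$ into $r=O(\sqrt{\Delta/\log\Delta})$ cliques $Q^x_1,\dots,Q^x_r$, each of size at most $s=\frac{\sqrt2}{2}\sqrt{\Delta\log\Delta}$. The reason for recording both bounds is that the target $\Delta/\log\Delta$ factors as $r\cdot (s/\log s)$, a product of two quantities each of order $\sqrt{\Delta/\log\Delta}$ (note $\log s\sim\frac12\log\Delta$, so $s/\log s=O(\sqrt{\Delta/\log\Delta})$); the proof should produce these two factors separately and then multiply.

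The covering itself I would build from bicliques converted into cliques, using two facts. First, any biclique $(A,B)$ of a claw-free graph already satisfies $\alpha(G[A])\le 2$ and $\alpha(G[B])\le 2$ (since $A\subseteq N(b)$ and $B\subseteq N(a)$ for $a\in A$, $b\in B$), so by Theorem~\ref{thm:color} each of $A$ and $B$ splits into $O(\sqrt{\Delta/\log\Delta})$ cliques; then each product $A_i\cup B_j$ is a genuine clique of $G$, and these cliques cover all edges of the biclique while charging a vertex of $A$ only the number of clique-pieces of $B$. Second, Theorem~\ref{thm:erdos} decomposes edges into bicliques with small local biclique number — but crucially I would apply it at the scale of the clique-parts, i.e. to bipartite graphs whose two sides live inside cliques $Q^x_i$, so that the resulting bicliques are automatically cliques of $G$ and no spurious edge is ever covered. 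Such a bipartite graph has $O(s)$ vertices, so Theorem~\ref{thm:erdos} gives local biclique number $O(s/\log s)=O(\sqrt{\Delta/\log\Delta})$. Summing this cost over the $r$ directions meeting a vertex yields valency $r\cdot O(\sqrt{\Delta/\log\Delta})=O(\Delta/\log\Delta)$, as required.

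The hard part will be organizing the cover globally so that these per-part, per-vertex charges really add up to $r\cdot O(\sqrt{\Delta/\log\Delta})$ rather than to something of order $\Delta$. The naive realization — using, for each $x$ and each part, the single clique $\{x\}\cup Q^x_i$ — covers every edge but charges a vertex $v$ once for \emph{every} neighbor that picks it up, giving valency as large as $\deg(v)=\Delta$. What is needed is genuine \emph{batching}: the clique used to cover an edge at $v$ must be shared among many neighbors of $v$ simultaneously, which is precisely what the biclique reformulation at scale $s$ (in place of one clique per incidence) is designed to achieve — exactly as the cobipartite lower-bound example is handled by a single biclique covering of the cross graph rather than by one clique per cross edge. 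I would therefore spend the bulk of the argument setting up, for each vertex and each of its $r$ directions, the correct bipartite graph living inside a pair of clique-parts, verifying that these local covers are mutually consistent and collectively cover $E(G)$, and only then invoking Theorems~\ref{thm:erdos} and \ref{thm:color}. The routine bookkeeping of the $o(1)$ terms and the treatment of bounded $\Delta$ (where the trivial bound $\lcc G\le\Delta$ of Proposition~\ref{pro:bounds} already suffices) can be deferred to the end.
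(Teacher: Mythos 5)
Your local machinery is exactly the paper's: claw-freeness makes $\overline{G[N(u)]}$ triangle-free, Theorem~\ref{thm:color} splits the neighborhood into $r=O(\sqrt{\Delta/\log\Delta})$ cliques of size at most $s=\frac{\sqrt2}{2}\sqrt{\Delta\log\Delta}$, Theorem~\ref{thm:erdos} is applied to the bipartite graphs between pairs of clique-parts so the resulting bicliques are cliques of $G$, and the valency arithmetic $r\cdot O(s/\log s)=O(\Delta/\log\Delta)$ is the same computation the paper performs. But the part you explicitly defer --- ``organizing the cover globally so that these charges really add up'' --- is not routine bookkeeping; it is roughly half of the paper's proof, and your proposal contains no mechanism for it. If you run your scheme over all vertices $x$, every edge is covered, but a vertex $v$ lies in $N[x]$ for $\deg(v)$ choices of the center $x$ and is charged afresh by each one, so the valency is $\Theta(\Delta)\cdot O(\sqrt{\Delta/\log\Delta})$ rather than $O(\Delta/\log\Delta)$; the within-neighborhood batching you describe does nothing to control this cross-center multiplicity. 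Conversely, if you use few centers, coverage fails, and you must say what covers the leftover edges.

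The paper resolves this tension with two ideas absent from your plan. First, it runs the neighborhood construction only for $u$ in a \emph{maximal independent set} $I$: claw-freeness gives each vertex at most $2$ neighbors in $I$, so the per-neighborhood valencies sum with a factor $2$ instead of $\Delta$. Second, this leaves uncovered precisely the edges $xy$ whose endpoints have no common neighbor in $I$, and the paper proves a structural claim about the graph $H$ these edges induce: using claw-freeness again, $N_H(x)$ is a clique of $G$ for every $x$, every non-isolated vertex of $H$ has exactly one neighbor in $I$, and each component of $H$ is either bipartite between two of the sets $N_i,N_j$ or has diameter at most $2$ --- in either case at most $2\Delta$ vertices. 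Theorem~\ref{thm:erdos} is then applied \emph{per component of $H$ at scale $2\Delta$} (not at your scale $s$), yielding local biclique number $O(\Delta/\log\Delta)$ directly, and each biclique of $H$ is a clique of $G$ by the clique-neighborhood fact. Without the independent-set choice of centers and this analysis of the residual graph, your ``mutual consistency'' step is exactly the unproved content of Theorem~\ref{thm:claw}, so the proposal as it stands has a genuine gap.
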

\begin{proof}
Fix a claw-free graph $G=(V,E)$ with maximum degree $\Delta$. We provide a clique covering for $G$, where each vertex is contained in at most $c\ {\Delta}/{\log \Delta}$ cliques.

Let $I\subset V$ be a maximal independent set of vertices in $G$ and fix a vertex $u\in I$. Since $G$ is claw-free, the subgraph of $G$ induced by the neighbourhood of $u$, $G[N(u)]$, has no independent set of size $3$. Thus, its compliment, $\overline{G[N(u)]}$, is a triangle-free graph on at most $\Delta$ vertices. Then, by Theorem~\ref{thm:color}, the vertex set of $\overline{G[N(u)]}$ can be properly  colored by at most $2\sqrt{2}(1+o(1))\sqrt{\Delta/\log \Delta}$ colors, such that each color class is of size at most $\sqrt{2}/2 \sqrt{\Delta\log \Delta}$. Let $C_1, \ldots, C_\ell$ be all color classes in the coloring of $\overline{G[N(u)]}$, thereby obtained (see Figure~\ref{fig:clawfree}). For each $i$,  $1\leq i\leq \ell$, the set $C_i\cup \{u\}$ forms a clique in $G$. Therefore, all edges incident with $u$ can be covered by at most $2\sqrt{2}(1+o(1))\sqrt{\Delta/\log \Delta}$ cliques.

\begin{figure}
\centering
\includegraphics[scale=.65]{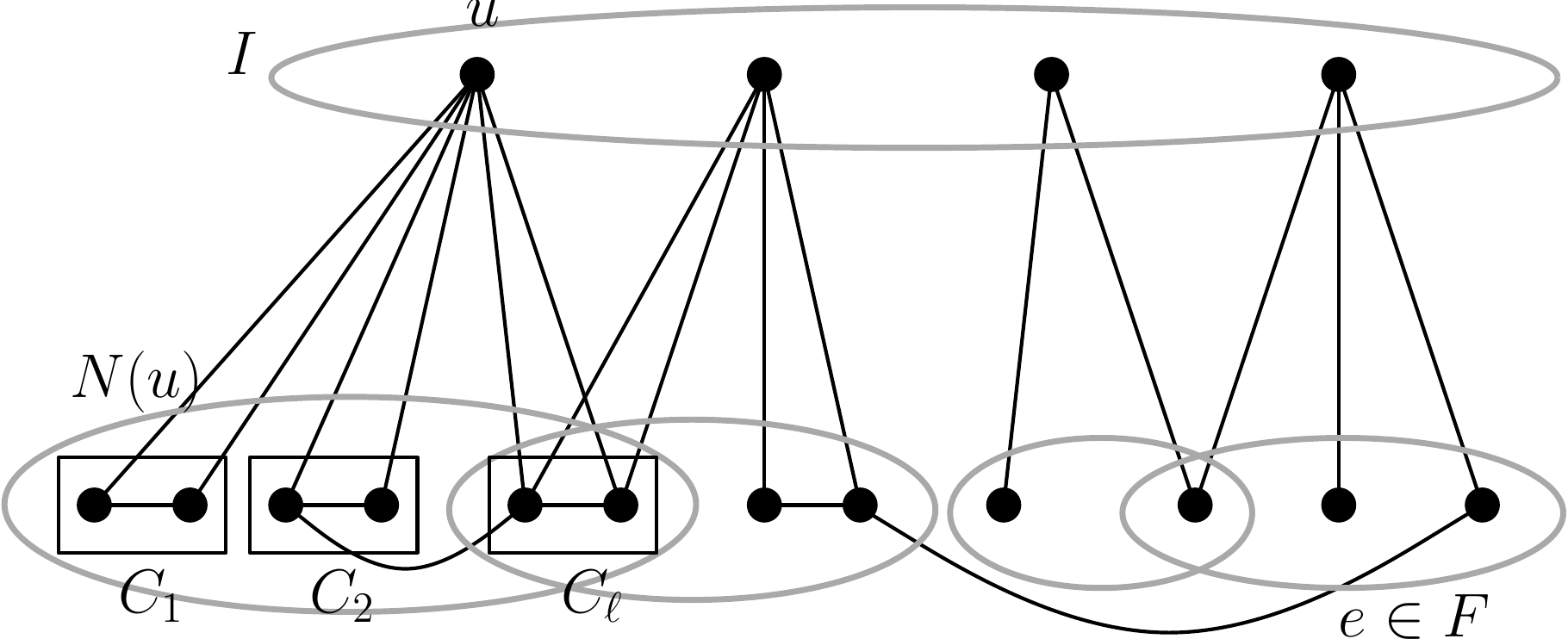}
\caption{A schema of the claw-free graph $G$.}\label{fig:clawfree}
\end{figure}
On the other hand, by virtue of \pref{thm:erdos}, for every $i, j$, $1\leq i<j\leq \ell$, the edges of $G$ which lie between the color classes $C_i, C_j$ can be partitioned into bicliques such that each vertex in $C_i\cup C_j$ is contained in at most $2\sqrt{2} c_0 \sqrt{\Delta\log \Delta}/\log \Delta$ of the bicliques. Since each $C_i$ induces a clique in $G$, the vertex set of each of these bicliques induces a clique in $G$. Hence,  all these cliques for every $1\leq i<j\leq \ell$,  cover all the edges in $G[N[u]]=G[N(u)\cup \{u\}]$. Let us denote this clique covering by $\mathcal{C}_u$.  In the clique covering $\mathcal{C}_u$, each vertex $v\in N[u]$ is contained in at most
\[2\sqrt{2} c_0  \frac{\sqrt{\Delta\log \Delta}}{\log \Delta}\ 2\sqrt{2}(1+o(1))\sqrt{\frac{\Delta}{\log \Delta}}=8c_0(1+o(1))\frac{\Delta}{ \log \Delta}\]
of the cliques. For each $u\in I$, let us cover the edges in $G[N[u]]$ by the clique covering $\mathcal{C}_u$ and define $\mathcal{C}:=\cup_{u\in I} \mathcal{C}_u$. Since $G$ is claw-free and $I$ is a maximal independent set, each vertex $v\in V$ has $1$ or $2$ neighbours in $I$. Therefore, every vertex $v\in V$ is contained in at most $16c_0(1+o(1))\Delta/\log \Delta$ of the cliques comprising $\mathcal{C}$. 

The cliques in $\mathcal{C}$ cover all the edges in $G[N[u]]$, for all $u\in I$, but it does not necessarily cover all the edges of $G$. Now, let $F\subset E$ be the subset of all edges which does not covered by the cliques of $\mathcal{C}$ and let $H$ be the subgraph of $G$ induced by $F$. For the remaining of the proof, we look for a suitable clique covering of $H$ and count up the contribution of each vertex in this covering.

In order to provide a desired clique covering for $H$, we have to describe the structure of the subgraph $H$. For this purpose, first, we establish a sequence of facts regarding  $H$.
%

Since all the edges covered by the cliques in $\mathcal{C}$ are exactly the ones in $G[N[u]]$, for all $u\in I$, we have the following fact.
\begin{description}
\item[Fact 1.]  For every edge $e=xy\in E$, we have  $e=xy\in F$ if and only if the vertices $x,y$ have no common neighbour in  $I$.
\end{description}
Assume $x\in V(H)$ and $y_1, y_2$  are two neighbours of $x$ in $H$, i.e. $y_1,y_2\in N_H(x)$. The vertex $x$ has also a neighbour $u$ in $I$. By Fact~1, $y_1,y_2$ are not adjacent with $u$. Hence, due to claw-freeness of $G$, $y_1,y_2$ must be adjacent in $G$. Consequently, the following fact holds. 
\begin{description}
\item[Fact~2.] For every vertex $x\in H$, its neighbours in $H$, $N_H(x)$ induces a clique in $G$.
\end{description}
 With the same argument (using Fact~1 along with the claw-freeness of $G$), one may prove the following fact.
\begin{description}
\item[Fact~3.] Every non-isolated vertex of $H$ has exactly one neighbour in the set $I$.
\end{description}

Assuming $I=\{u_1, \ldots, u_\alpha\}$, with the aid of Facts~1 and 3, the non-isolated vertices of $H$ can be partitioned into $\alpha$ disjoint sets $N_1,\ldots, N_\alpha$, where $N_i:=\{x\in V(H)\ :\  x \text{ is non-isolated and is adjacent to } u_i\}$, $1 \leq i \leq \alpha$.

Now suppose that $x \in V(H)$ and $y,z\in N_H(x)$, where $y \in N_i$ and $z \in N_j$, for some $i \neq j$. By Fact~2, we know that $y,z$ are adjacent in $G$. But $y,z$ has no common neighbour in $I$. Thus, due to Fact 1, $yz\in F$. Hence, the following assertion holds.
\begin{description}
\item[Fact~4.] If $x \in V(H)$ and $y,z\in N_H(x)$, where $y \in N_i$ and $z \in N_j$, for some $i \neq j$, then $yz$ is an edge in $F$.
\end{description}
Now, we are ready to prove the following claim concerning the structure of the graph $H$.
\begin{description}
\item[Claim.] Every connected component of $H$ is either
\begin{itemize}
\item an isolated vertex, or
\item a bipartite graph with bipartition $(N_i,N_j)$, for some $1\leq i<j\leq \alpha$, or
\item a graph on at most $2\Delta$ vertices whose diameter is at most $2$.
\end{itemize}
\end{description}
\begin{proof}[\textbf{Proof of Claim.}]
\renewcommand{\qedsymbol}{$\blacksquare$}
Fix a vertex $x\in N_i$ and for $d\in \mathbb{Z}^+\cup\{0\}$, let  $D_d:=\{y \in V(H) \ :\  d_H(x,y)=d\}$. First, we prove that if $D_d\subseteq N_j$, for some $d,j$, then $D_{d+2}\subseteq N_j$. To see  this, assuming $D_d\subseteq N_j$, let $y\in D_{d+2}$. Then, $y$ has a neighbour $y'$ in $D_{d+1}$ and $y'$ has a neighbour $y''$ in $D_d$, where $y''\in N_j$. If $y\notin N_j$, then, due to Fact~4, $y,y''$ should be adjacent in $H$, which is a contradiction. Thus, $y\in N_j$.

Hence, since $D_0=\{x\}\subseteq N_i$, we have $D_{2d}\subseteq N_i$, for every $d$. If, in addition, $D_1\subseteq N_j$, for some $j$, then $D_{2d+1}\subseteq N_j$, for all $d$. This shows that, in case $D_1\subseteq N_j$, the connected component of $H$ containing $x$, is a bipartite graph with bipartition $(N_i,N_j)$.

\begin{figure}
\centering
\includegraphics[scale=.65]{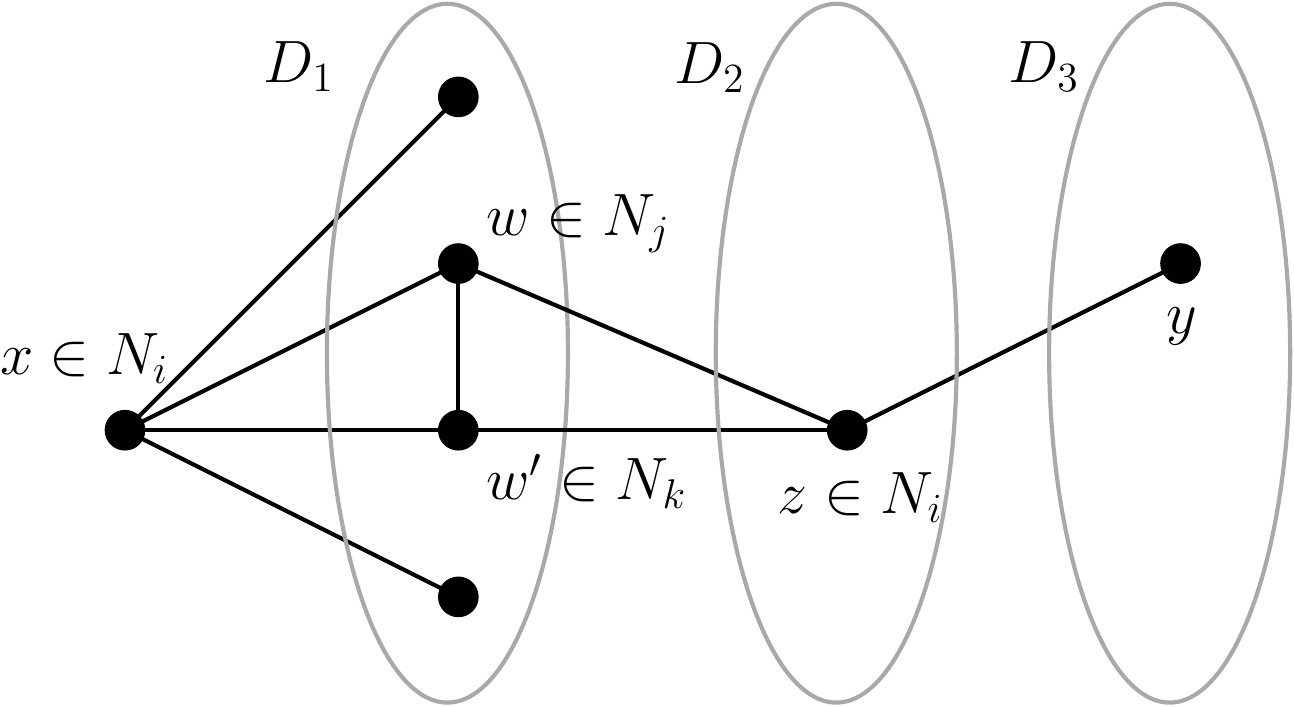}
\caption{A schema of a connected component of $H$, assuming, to the contrary, that $D_3\neq \emptyset$.}\label{fig:H}
\end{figure}
Now assume that $D_1\not \subseteq N_j$, for all $j$. With this assumption, we prove that the set $D_3$ is empty. Assume, to the contrary, that $y\in D_3$ and let $z$ be a neighbour of $y$ in $D_2$ and also let $w$ be a neighbour of $z$ in $D_1$ (See Figure~\ref{fig:H}). We have $w\notin N_i$ (because of Fact~1).  Assume $w\in N_j$, for some $j\neq i$. Since $D_1\not \subseteq N_j$, there is a vertex $w'\in D_1\cap N_k$, for some $k\neq j$. Due to Fact~4, $w$ and $w'$ are adjacent in $H$. Moreover, $z\in N_i$, again by Fact~4, $z$ and $w'$ are adjacent in $H$. Now, $y\in D_3$ are not adjacent to $w,w'\in D_1$ in $H$. Hence, by Fact~4, $y\in N_j\cap N_k$. This contradicts with the fact that $N_j$ and $N_k$ are disjoint. Consequently, the set $D_3$ is empty and the connected component of $H$ containing $x$ has diameter at most 2. On the other hand, $|D_1|= |N_H(x)|\leq \Delta$ and $|D_0\cup D_2|\leq |N_i|\leq \deg(u_i)\leq \Delta$. Hence, the connected component has at most $2\Delta$ vertices.
\end{proof}
Now, we get back into the proof of \pref{thm:claw}. Consider a nontrivial connected component of the graph $H$. By the above claim, either it is a graph on at most $2\Delta$ vertices whose diameter is at most 2, or it is a bipartite graph with bipartition $(N_i,N_j)$, for some $1\leq i<j\leq \alpha$. Since $|N_i|\leq \deg(u_i)\leq \Delta$, the latter case has also at most $2\Delta$ vertices. Hence, every connected component of $H$ is of size at most $2\Delta$. Therefore, by Theorem~\ref{thm:erdos}, one may construct   a biclique covering for    every connected component of $H$ where each of its vertices belongs to at most $2c_0\Delta/\log \Delta$ bicliques. Nevertheless, because of Fact~2, every biclique of $H$ induces a clique in $G$. As a consequent, one may provide a collection of cliques of $G$ which cover all the edges of $H$ and each vertex belongs to at most  $2c_0\Delta/\log \Delta$ of these cliques. This collection together with the clique covering $\mathcal{C}$ provides a clique covering for $G$, for which every vertex contributes in    at most $18c_0(1+o(1))\Delta/\log \Delta$ number of cliques,  thereby establishing Theorem~\ref{thm:claw}.
\end{proof}
%

%

%
%
%

\section{Linear interval graphs}\label{sec:lig}

In this section, we restrict ourself to a class of claw-free graphs, called linear interval graphs. A linear interval graph $G$ is constructed as follows. Consider a line and designate some points on it as the vertices of $G$. Also choose some intervals of the line (by an interval, we mean a proper subset of the line homeomorphic  to $[0,1]$). A pair of vertices $x,y$ being adjacent in $G$ if and only if they are both included in at least one of the intervals. A \textit{circular interval graph} is defined in the same way, taking a circle instead of a line. All such graphs are claw-free, and indeed they are quasi-line graphs. Linear and circular interval graphs play an important role in characterization of claw-free graphs \cite{seymour}.

In order to investigate local clique covering of linear interval graphs, we begin by a key  example. This example will take a significant role in analysing the general setting of linear interval graph. Let us define a linear interval graph on $2n$ vertices, $1,2,\ldots, 2n$, where the selected  intervals are  $[i,n+i]$, for all $1\leq i\leq n$. This graph is composed of a pair of cliques $X=\{1,\ldots, n\}$ and $Y=\{n+1,\ldots, 2n\}$ and each vertex $i\in X$ is adjacent to vertices $n+1, \ldots, n+i$ in $Y$. We denote this graph  by $K^\nabla_{n,n}$ and it is shown in Figure~\ref{fig:t5}, for $n=5$.
\begin{figure}
\centering
\includegraphics[scale=.4]{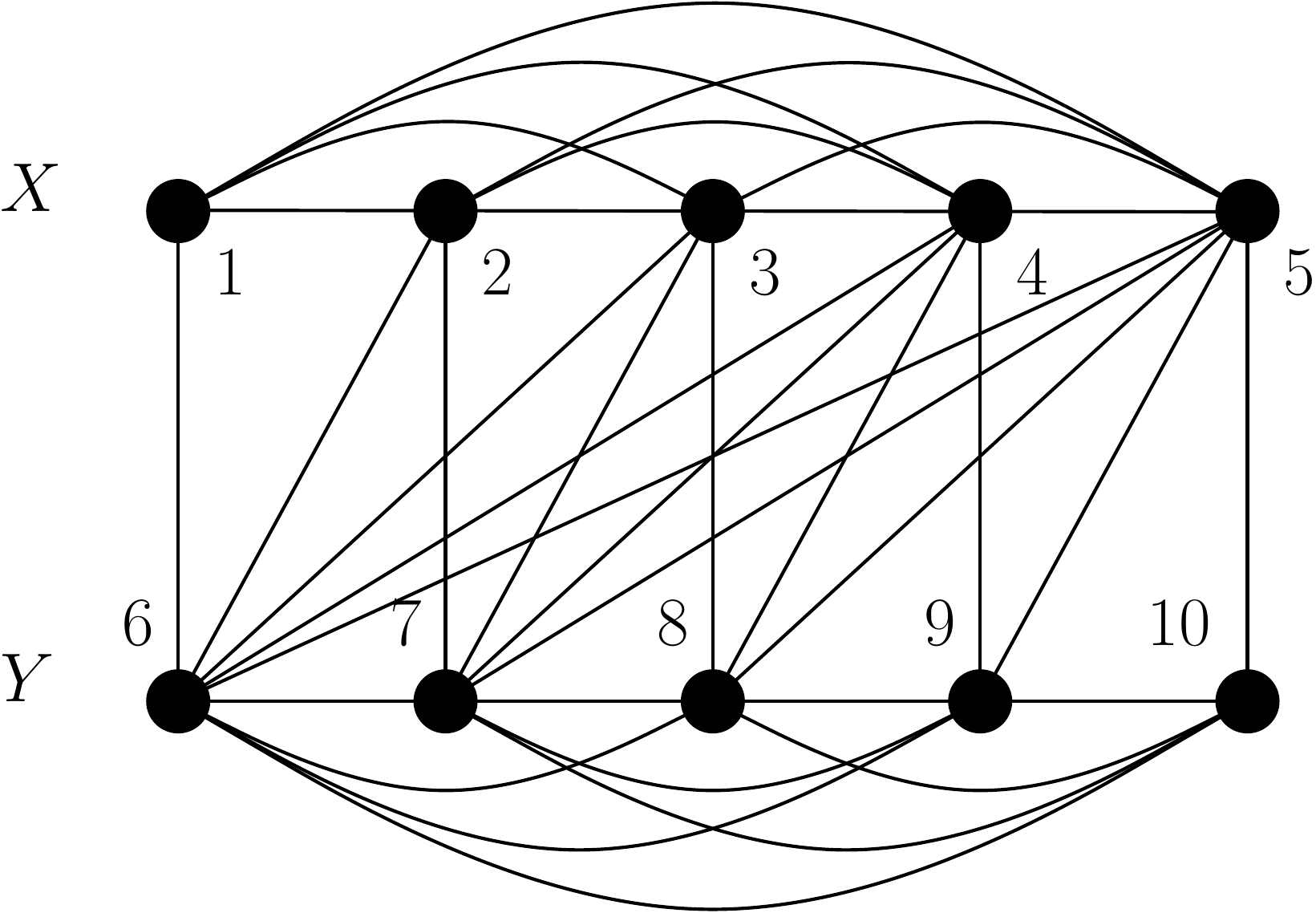}
\caption{The linear interval graph $K^\nabla_{5,5}$.}
\label{fig:t5}
\end{figure}

In the following theorem we prove that $\lcc{K^\nabla_{n,n}}$ is logarithmic in $n$.
\begin{thm}\label{thm:Gn}
For every positive integer $n$, $\lcc{K^\nabla_{n,n}}= \frac{1}{2}\log n+\frac{1}{4}\log \log n+O(1)$.
\end{thm}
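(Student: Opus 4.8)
The plan is to reduce the statement to a question about cross-intersecting pairs of set families and then attack the matching upper and lower bounds separately. The graph $K^\nabla_{n,n}$ consists of two cliques $X=\{1,\dots,n\}$ and $Y=\{n+1,\dots,2n\}$ together with the ``staircase'' of cross edges, $i\sim n+j\iff j\le i$. First I would cover all edges inside $X$ (resp.\ inside $Y$) by the single clique $X$ (resp.\ $Y$), which adds only $1$ to the valency of every vertex. The remaining edges form the half-graph between $X$ and $Y$, and a clique of $G$ meeting both sides is exactly an all-ones rectangle $S\times T$ with $\max T\le\min S$, i.e.\ a biclique of the half-graph. Recording, for each $i\in X$, the set $R_i$ of such rectangles through $i$ and, for each $n+j\in Y$, the set $C_j$ of rectangles through $n+j$, a clique covering of the cross edges is precisely a family with $R_i\cap C_j\neq\emptyset\iff j\le i$. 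Hence $\lcc{K^\nabla_{n,n}}=r(n)+O(1)$, where $r(n)$ is the least $k$ admitting such a family with $|R_i|,|C_j|\le k$ (equivalently, $r(n)$ is the local biclique cover number of the half-graph). It then suffices to prove $r(n)=\tfrac12\log n+\tfrac14\log\log n+O(1)$.

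For the upper bound I would exhibit a near-extremal family explicitly. Encode the levels $1,\dots,n$ by the $\binom{2k}{k}$ balanced binary strings of length $2k$ (weight exactly $k$), ordered lexicographically. Over a universe of (position, prefix) tokens, put into $R_s$ one token $(p,\,s_1\cdots s_{p-1})$ for each position $p$ with $s_p=1$, and into $C_t$ one token $(p,\,t_1\cdots t_{p-1})$ for each $p$ with $t_p=0$, together with a private element shared only by $R_s$ and $C_s$. A common token occurs exactly at the first coordinate where $s$ and $t$ differ with $s$ carrying the $1$, so $R_s\cap C_t\neq\emptyset$ iff $s\ge_{\mathrm{lex}}t$, which is the required staircase. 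Each set then has size $k+1$ and there are $\binom{2k}{k}$ levels. Translating the rectangles back into cliques of $G$ and adjoining the cliques $X,Y$, this produces a clique covering of valency $k+O(1)$ whenever $\binom{2k}{k}\ge n$, giving the upper bound after inversion.

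For the lower bound I would note that the family arising from any clique covering is \emph{skew} cross-intersecting: after reversing the order and an index shift one obtains families $A_i,B_i$ with $A_i\cap B_i=\emptyset$ and $A_i\cap B_j\neq\emptyset$ for $i<j$. A skew Bollob\'as-type inequality, proved by the linear-algebraic / random-permutation method and isolated in Section~\ref{sec:bol} as the advertised byproduct, then yields $\sum_i\binom{|A_i|+|B_i|}{|A_i|}^{-1}\le 1$, whence $n\le\binom{2k}{k}+O(1)$ when all sets have size at most $k$. This matches the construction up to a constant factor in $n$.

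The final asymptotics follow by inverting $\log\binom{2k}{k}=2k-\tfrac12\log k+O(1)$: writing $k=\tfrac12\log n+\delta$ forces $\delta=\tfrac14\log\log n+O(1)$, and since a constant factor in $n$ perturbs $k$ only by $O(1)$, both bounds collapse to the stated value. I expect the genuine obstacle to lie in the upper bound: one must find a construction that is extremal up to a constant factor, since a naive dyadic halving of the staircase gives only $\log n$ (losing the factor $2$ and all control on the $\log\log n$ term). Dually, on the lower-bound side the challenge is to prove the set-pair inequality with the precise binomial $\binom{2k}{k}$ rather than a cruder estimate, as only the exact binomial produces the second-order $\tfrac14\log\log n$ term.
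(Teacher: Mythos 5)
Your overall architecture is sound and genuinely different from the paper's. The paper determines the extremal function exactly: it defines $f(r,s)$ as the largest $n$ admitting a rectangle covering of $T_n$ with row valency $r$ and column valency $s$, proves the recursion $f(r,s)=f(r-1,s)+f(r,s-1)$ (by extending the rectangle through the $(1,1)$ entry until it meets the diagonal, splitting $T_n$ into two smaller staircases), and solves it with generating functions to obtain $f(r,s)=\binom{r+s-1}{r-1}+\binom{r+s-2}{r}$; both bounds then fall out of one formula. Your route trades exactness for explicitness: the prefix-token encoding by balanced strings of weight $k$ is correct (a common token can only occur at the first coordinate where $s$ and $t$ differ with $s$ carrying the $1$, the resulting cells form genuine rectangles inside the staircase, and each row and column meets $k+1$ of them), and it shows $f(k+1,k+1)\geq\binom{2k}{k}$, which is extremal up to a constant factor in $n$ --- enough, as you say, because a constant factor in $n$ moves $k$ by only $O(1)$. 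What the paper's method buys additionally is the exact value of $f(r,s)$, which is precisely what powers the Bollob\'as-type corollary of Section~\ref{sec:bol}; your method cannot recover that corollary, only its asymptotics.

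There is, however, a genuine flaw in your lower bound as written. The weighted skew inequality $\sum_i\binom{|A_i|+|B_i|}{|A_i|}^{-1}\leq 1$ is \emph{false} under the skew hypothesis: take $(A_1,B_1)=(\{1\},\{2\})$, $(A_2,B_2)=(\{2\},\{1\})$, $(A_3,B_3)=(\{3\},\{1,2\})$; then $A_i\cap B_i=\emptyset$ for all $i$ and $A_i\cap B_j\neq\emptyset$ for all $i<j$, yet the sum is $\frac12+\frac12+\frac13>1$. Relatedly, the random-permutation proof does not transfer to the skew setting, since the events ``all of $A_i$ precedes all of $B_i$'' are no longer pairwise disjoint when only one of $A_i\cap B_j$, $A_j\cap B_i$ is required to be nonempty; the skew version needs the algebraic (exterior-algebra) method. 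You also cannot source the inequality from Section~\ref{sec:bol}: there the paper \emph{derives} its set-pair inequality from the formula for $f(r,s)$, i.e.\ from this very theorem, so invoking it here would be circular. Fortunately the repair is immediate and costs nothing: Frankl's skew inequality \cite{frankl}, stated in cardinality form with $|A_i|\leq r$, $|B_i|\leq s$, gives $|\mathcal{F}|\leq\binom{r+s}{r}$ under exactly the hypotheses your shift-and-reverse produces, so with $r=s=k$ you get $n-1\leq\binom{2k}{k}$, and your inversion of $\log\binom{2k}{k}=2k-\frac12\log k+O(1)$ goes through unchanged. (Your closing worry is also slightly off: one does not need the ``exact binomial'' on the lower-bound side --- any bound of the shape $c\,4^k/\sqrt{k}$ yields the $\frac14\log\log n$ term, and Frankl's bound is of this shape.)
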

First, note that if we neglect the edges inside $X$ and $Y$, we obtain a bipartite graph, $G_n$ with bipartition $(X,Y)$. Since $X$ and $Y$ are cliques, clique coverings of $K^\nabla_{n,n}$ are corresponding to biclique coverings of $G_n$ and $\lcc{K^\nabla_{n,n}}$ differs from $\lbc(G_n)$ by at most $1$ (for the definition of lbc, see Section~\ref{sec:intro}). As a result, it is enough to compute $\lbc(G_n)$, or equivalently the local boolean rank of its bipartite adjacency matrix, denoted by  $T_n$ (see Section~\ref{sec:intro}). The matrix $T_n$ is the lower triangular matrix whose all lower diagonal entries are equal to one, i.e.
\begin{equation}
\label{eq:tn}
T_n(i,j):=\begin{cases}
0& i< j, \\
1 & i\geq j.
\end{cases}
\end{equation}

By the definition, the local boolean rank of $T_n$ is the  smallest integer  $k$ for which we can cover the one entries of $T_n$ by rectangles (i.e. all-ones submatrices), such that each row and column intersects at most $k$ rectangles. 

Hence, in order to establish Theorem~\ref{thm:Gn}, it is enough to prove that $\lr_B(T_n)=\frac{1}{2}\log n+\frac{1}{4}\log \log n+O(1)$. Our proof is based on an algebraic discussion. Nevertheless, it is more illustrative, if one may find a constructive proof. To give an intuition to the problem, prior to getting into the proof, let us propose a simple recursive covering of the matrix $T_n$ by rectangles in which every row and column intersects at most $\log_3 n+O(1)$ rectangles. This ensures that $\lcc{K^\nabla_{n,n}}\leq \log_3 n+O(1)$, which is slightly worse than the best bound. The covering process is depicted in Figure~\ref{fig:Tn}. All rectangles are shown by boxes and shaded triangles are not covered yet (Figure~\ref{fig:Tn} left). Then we treat each shaded triangle as a new smaller matrix and recursively cover it exactly in the same way. Proceeding this process, until all shaded triangles become $1\times 1$, yields to a complete covering of $T_n$ (Figure~\ref{fig:Tn} right).

\begin{figure}
\centering
\includegraphics[scale=.35]{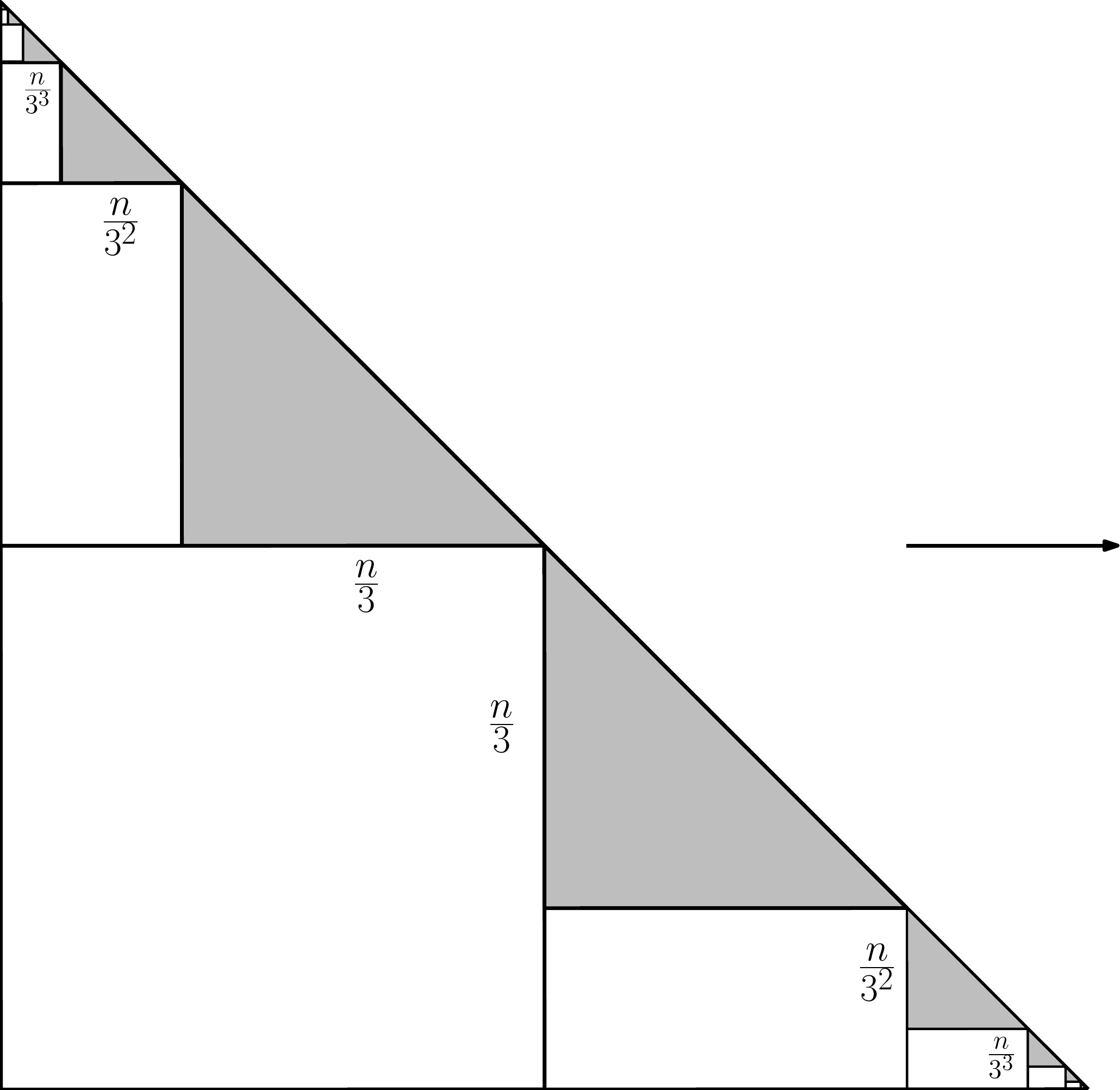}\qquad
\includegraphics[scale=.35]{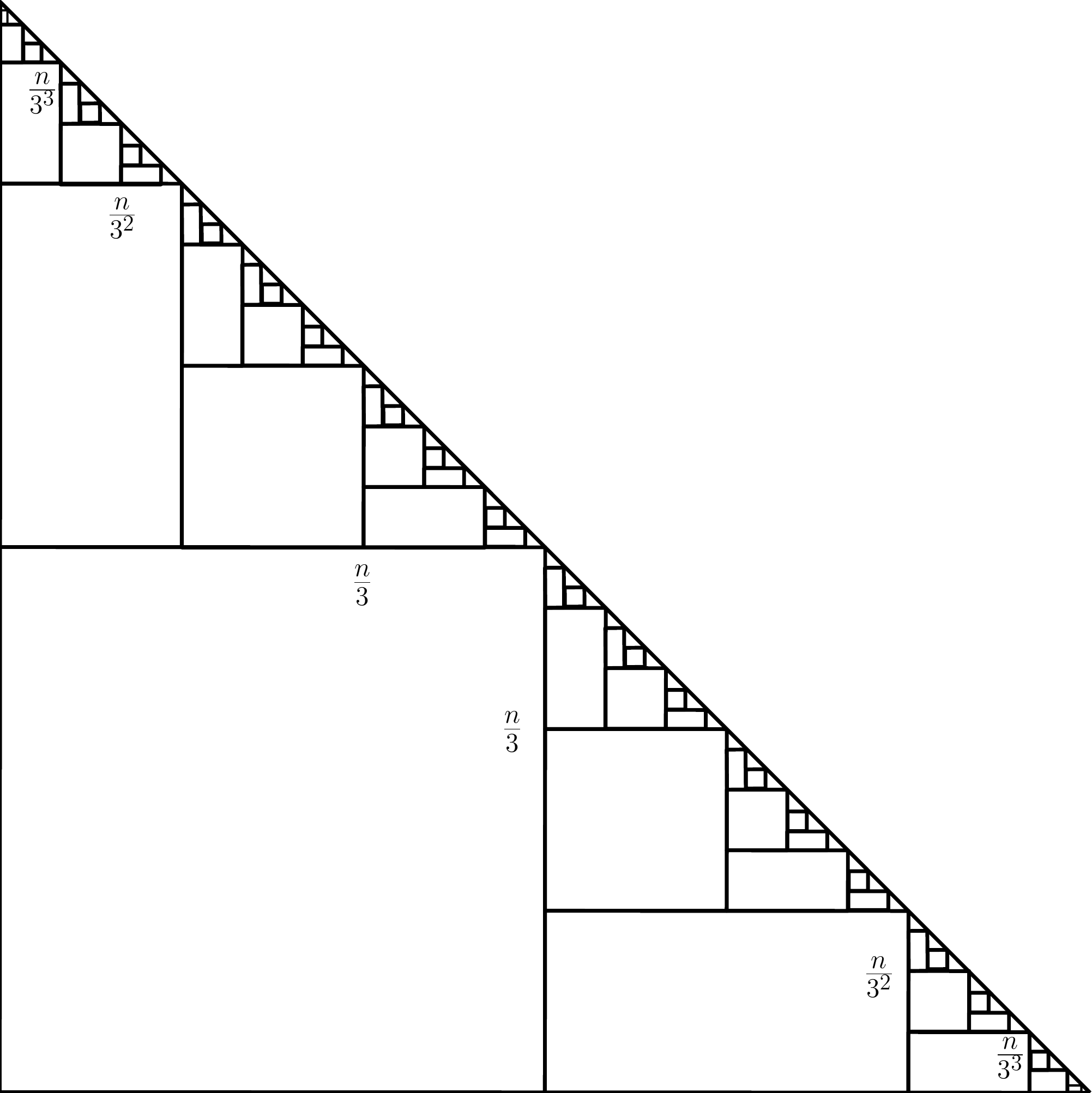}
\caption{Covering the matrix $T_n$ using  $\log_3 n+O(1)$ rectangles.}
\label{fig:Tn}
\end{figure}
Now we get into the proof of Theorem~\ref{thm:Gn}.
\begin{proof}[\textbf{Proof of Theorem~\ref{thm:Gn}.}]
Following the argument above, we prove that the lower triangle matrix $T_n$ can be covered by rectangles, in a way that each row and column contributes in at most $\frac{1}{2} \log n+\frac{1}{4} \log n+O(1)$ rectangles.

For every covering $\mathcal{C}$ of a matrix by rectangles, we  define the \textit{row valency} of $\mathcal{C}$  to be  the maximum number of appearance of each row in the rectangles. Analogously, \textit{column valency} of $\mathcal{C}$ is defined to be the maximum number of appearance of each column in the rectangles.

Now, for positive integers $r,s$, we nominate the function $f(r,s)$  as the maximum number $n$ for which one can provide the matrix $T_n$ with a covering by rectangles where its row and column valency are at most $r$ and $s$, respectively, i.e. each row (resp. column) intersects at most r (resp. s) rectangles. It is easy to see that
\begin{equation}\label{eq:lccfrr}
\lr_B(T_n)=\min\{ r\ :\  n\leq f(r,r)\}.
\end{equation}

Clearly,  $f(r,1)=r$ and $f(1,s)=s$. In addition, it satisfies the following recurrent relation.
\begin{equation}\label{eq:rec}
f(r,s)= f(r-1,s)+f(r,s-1), \quad r,s>1.
\end{equation}
To see why the recurrent relation holds, consider a covering of $T_n$ for $n=f(r,s)$. Without loss of generality, we can assume that the rectangle containing the entry $(1,1)$ also contains a diagonal entry (otherwise, we can extend the rectangle, without increasing the row and column valency). Thus, the matrix $T_n$ is divided into two submatrices $T_{n_1}$ and $T_{n_2}$ (Figure~\ref{fig:rec}), where the row (resp. column) valency of $T_{n_1}$ is at most $r-1$ (resp. $s$) and the row (resp. column) valency of $T_{n_2}$ is at most $r$ (resp. $s-1$). This guarantees that $n_1\leq f(r-1,s)$ and $n_2\leq f(r,s-1)$, concluding $n=f(r,s)\leq f(r-1,s)+f(r,s-1)$. The reverse inequality trivially holds.

\begin{figure}
\centering
\includegraphics[scale=.3]{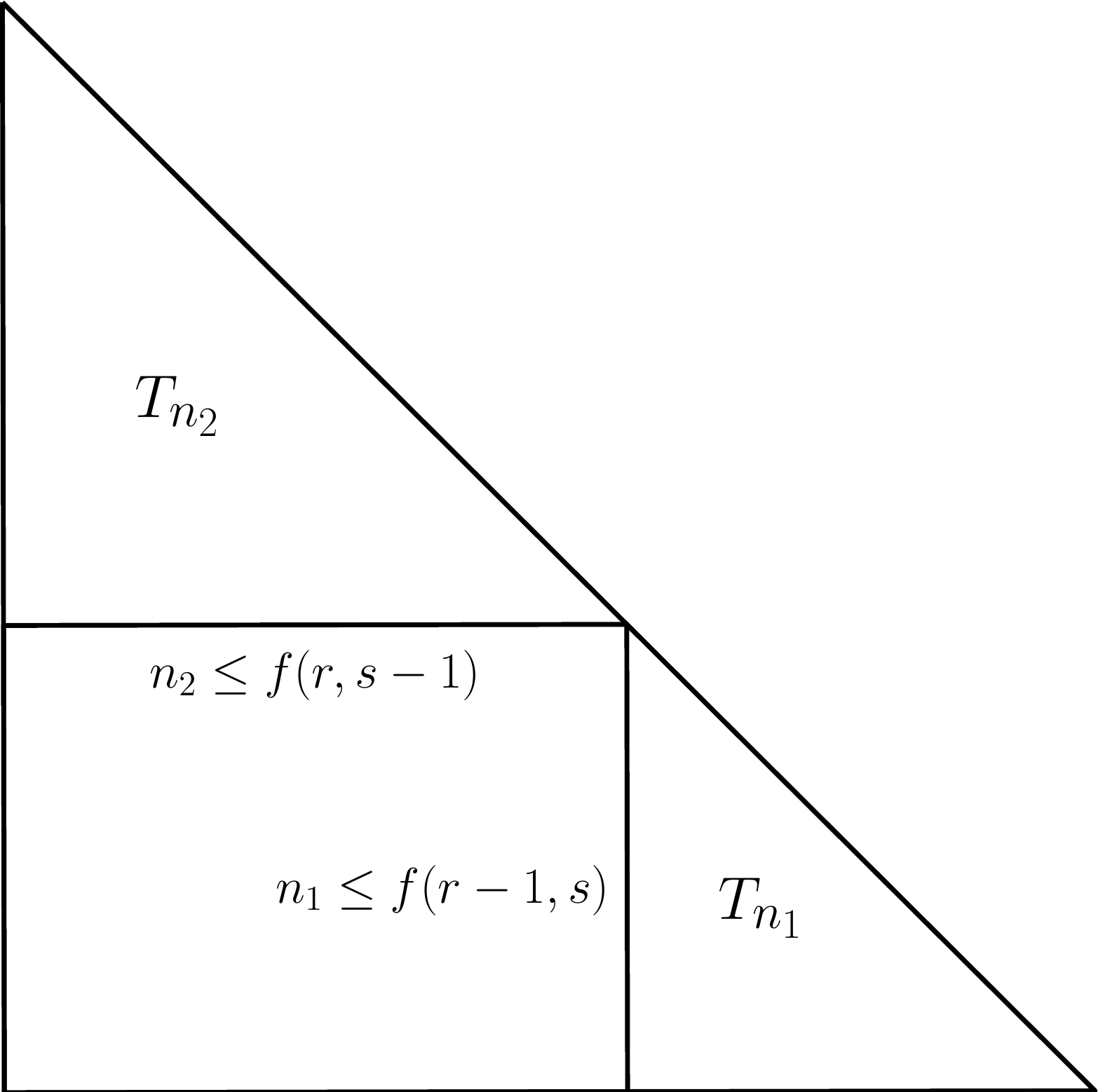}
\caption{The schematic proof of the recurrent relation \eqref{eq:rec}.}
\label{fig:rec}
\end{figure}
Now, in order to have an explicit formula for $f(r,s)$, for fixed integer $s$, let  $g_s(x)=\sum_{r=1}^\infty f(r,s) x^r$ be the generating function of $f(r,s)$.
Then, using \eqref{eq:rec}, we have
\begin{align*}
g_1(x)&= \sum_{r=1}^\infty r x^r=\frac{x}{(1-x)^2},\\
(1-x)g_s(x)&= g_{s-1}(x)+x, \quad s>1.
\end{align*}
Solving this recurrent relation implies that
\begin{align*}
g_s(x)&= \frac{x}{(1-x)^{s+1}}+\frac{1}{(1-x)^{s-1}}-1\\
&= \sum_{r=0}^\infty \binom{r+s}{r} x^{r+1} +\sum_{r=0}^\infty \binom{r+s-2}{r} x^r-1\\
&=\sum_{r=1}^\infty \left[\binom{r+s-1}{r-1} +\binom{r+s-2}{r}\right] x^r.
\end{align*}
We conclude that $f(r,s)$ satisfies the following explicit formula
\begin{equation}
\label{eq:frs}
f(r,s)=\binom{r+s-1}{r-1} +\binom{r+s-2}{r}, \quad \forall r,s\in \mathbb{Z}^+.
\end{equation}
Hence
\begin{align*}
f(r,r)&=\binom{2r-1}{r-1} +\binom{2r-2}{r}\\ 
&= \frac{1}{2}\binom{2r}{r}+\frac{r-1}{r} \binom{2r-2}{r-1}\sim \frac{1}{2} \frac{4^r}{\sqrt{\pi r}}+ \frac{4^{r-1}}{\sqrt{\pi(r-1)}}\sim \frac{3\times 4^{r-1}}{\sqrt{\pi (r-1)}}.
\end{align*}
This, together with \eqref{eq:lccfrr}, yields
\[\lr_B(T_n)=\log_4 n+\frac{1}{2} \log_4\log_4 n+O(1)=\frac{1}{2}\log n+\frac{1}{4} \log \log n+O(1),\]
as desired.
\end{proof}

Now we are ready to deal with the general case of linear interval graphs. First of all, we set a couple of assumptions. Note that, in the definition of the linear interval graphs, if there is some interval containing another interval, we can exclude smaller one, without making any change in the structure of the graph. For this reason, henceforth, we assume that no interval is contained in another one. Moreover, if there is no vertex between each two consecutive opening (closing) brackets of the intervals, then we can move one of the brackets such that one interval comes to be contained in another. Also, if there is no vertex between an opening bracket of one interval and closing bracket of another one, then we can move one of the brackets to make these intervals disjoint. Since these modifications do not change the adjacencies in the graph, henceforth, we assume that there is at least one vertex between each two consecutive opening (closing) brackets and at least one vertex between opening bracket of an interval and closing bracket of another interval.

\begin{thm} \label{thm:lig}
For every linear interval graph $G$ with maximum degree $\Delta$, we have $\lcc{G}\leq \log \Delta+\frac{1}{2}\log \log \Delta+O(1)$. Furthermore, if $G$ is twin-free, then 
\[\frac{1}{2} \log\Delta+\frac{1}{4} \log\log\Delta +O(1)\leq \lcc{G}\leq \log\Delta +\frac{1}{2}\log\log\Delta+O(1).\]
\end{thm}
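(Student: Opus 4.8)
The plan is to transfer the exact computation of $\lr_B(T_n)$ from \pref{thm:Gn} to arbitrary linear interval graphs: the upper bound by a localized covering argument, and the lower bound (for twin-free $G$) by exhibiting the pattern $T_m$ as a sub-structure. I first fix the staircase description. Ordering the vertices $1,\dots,N$ along the line and using that no interval contains another, each vertex $i$ is adjacent exactly to a contiguous block $[l(i),r(i)]$ with $l,r$ non-decreasing, and the degree bound forces $r(i)-i\le\Delta$ and $i-l(i)\le\Delta$. The key structural observation is that any contiguous all-ones rectangle $S\times T$ with $\max S<\min T$ is a clique of $G$: adjacency of the extreme pair $\min S\sim\max T$ propagates to every pair by monotonicity of $r$. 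Hence covering the edges of $G$ by cliques reduces to covering the strict upper-triangular staircase region $A=\{(i,j):i<j\le r(i)\}$ by all-ones rectangles, exactly as for $T_n$ in \pref{thm:Gn}.

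The heart of the upper bound is the lemma that every linear interval graph $H$ on $N$ vertices satisfies $\lcc{H}\le \lr_B(T_N)+O(1)=\tfrac12\log N+\tfrac14\log\log N+O(1)$. Since the region $A$ sits inside the full triangle $T_N$, I expect the recursive covering of \pref{thm:Gn} to carry over: peel off a maximal all-ones (hence clique) rectangle at a corner and recurse on the two remaining sub-staircases, so that the two-parameter function $f(r,s)$ still bounds the largest staircase coverable with prescribed row and column valencies. A clean supporting fact is that a staircase bipartite matrix, after merging equal rows and equal columns, collapses to a $T_k$ with $k$ the number of distinct steps, so its local boolean rank is at most $\lr_B(T_k)$. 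I expect this lemma to be the main obstacle, since checking that the balanced recursion $f(r-1,s)+f(r,s-1)$ goes through for an \emph{arbitrary} (non-triangular) staircase — where the peeled rectangle need not split the region as cleanly as for $T_n$ — requires a careful case analysis.

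Applying the lemma to all of $G$ only yields $\tfrac12\log N$, so I localize. Partition the line into consecutive blocks $B_1,B_2,\dots$ of $\Delta$ vertices each. Because every edge $ij$ has $|i-j|\le\Delta$, it lies inside some double block $B_s\cup B_{s+1}$, an induced linear interval graph on at most $2\Delta$ vertices; covering each double block via the lemma costs valency $\lr_B(T_{2\Delta})+O(1)=\tfrac12\log\Delta+\tfrac14\log\log\Delta+O(1)$. As each vertex lies in at most two double blocks, the total valency is at most $\log\Delta+\tfrac12\log\log\Delta+O(1)$, which is the asserted upper bound (and it holds for all linear interval graphs, not only twin-free ones).

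For the lower bound I would locate a vertex $v$ of degree $\Delta$ and the side — say the right — carrying at least $\Delta/2$ of its neighbours, and extract a clean copy of $T_m$ with $m=\Theta(\Delta)$. Concretely I would select an increasing clique $X=\{x_1<\dots<x_m\}$ and, to its right, an increasing clique $Y=\{y_1<\dots<y_m\}$ with $y_t\le r(x_s)\iff t\le s$, so that the cross-adjacencies between $X$ and $Y$ form exactly the pattern $T_m$ (for the uniform-reach graph $P^{\,d}$ one simply takes $X=\{v+1,\dots,v+d\}$ and $Y=\{v+d+1,\dots,v+2d\}$, obtaining $T_d$ on the nose). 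Then, in any clique covering of $G$, each clique $Q$ projects to the rectangle $(Q\cap X)\times(Q\cap Y)$, which is all-ones in $T_m$; these rectangles cover $T_m$ with row and column valencies bounded by $\lcc{G}$, whence $\lcc{G}\ge \lr_B(T_m)=\tfrac12\log\Delta+\tfrac14\log\log\Delta-O(1)$. The delicate step is guaranteeing $\Theta(\Delta)$ distinct reach-levels in the window so that $X$ and $Y$ exist; this is exactly where twin-freeness, together with the bracket normalisation (a vertex between consecutive brackets), is used, arguing symmetrically on the left to handle stretches where $r$ is constant but $l$ strictly increases.
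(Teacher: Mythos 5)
Your lower-bound sketch is essentially the paper's own argument: the paper also extracts, near a maximum-degree vertex, an induced copy of $K^\nabla_{N,N}\setminus (N+1)$ with $N\geq \Delta/6$ (twin-freeness plus the bracket normalisation guaranteeing one vertex per gap, via the count $\deg_G(v)\leq o_{i-1}+c_{i-1}+o_i+c_i+o_{i+1}+c_{i+1}\leq 6N$), and then invokes Theorem~\ref{thm:Gn} together with monotonicity of lcc under induced subgraphs; your clique-to-rectangle projection is an equivalent phrasing. So that half is fine in outline, though you only gesture at the ``delicate step''.

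The upper bound, however, has a genuine gap, in two places. First, your key lemma --- that every linear interval graph on $N$ vertices satisfies $\lcc{H}\leq \lr_B(T_N)+O(1)$ --- is exactly the step you leave unproved, and your supporting fact is false: the staircase region of a linear interval graph is \emph{two-sided} (row $i$ has ones in columns $i+1,\dots,r(i)$, so both the left and right boundary of each row move), and merging equal rows and columns of such a banded pattern does not collapse it to a triangular $T_k$; e.g.\ for a path the region is the superdiagonal, which merges to a permutation pattern. The recursion $f(r,s)=f(r-1,s)+f(r,s-1)$ in Theorem~\ref{thm:Gn} uses specifically that the rectangle through the corner entry $(1,1)$ can be extended to the diagonal, splitting $T_n$ into two smaller \emph{triangles}; for a banded staircase the split leaves non-triangular pieces, so the bound does not carry over without a new argument. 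Second, even granting a rectangle covering of the staircase with row and column valencies each at most $r$, your conversion to cliques double-counts: in a general linear interval graph a vertex is simultaneously a row and a column (it has neighbours on both sides), so its clique valency is bounded only by its row valency \emph{plus} its column valency, i.e.\ $\lcc{H}\leq 2\lr_B+O(1)$, not $\lr_B+O(1)$. With honest accounting your double-block scheme then yields valency at most $2\cdot 2\lr_B(T_{2\Delta})+O(1)\approx 2\log\Delta$, a factor $2$ worse than the claimed $\log\Delta+\frac{1}{2}\log\log\Delta+O(1)$; and as stated (without the doubling) your lemma would improve the paper's bound for graphs like powers of paths, which is far from obvious and certainly not free. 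The paper avoids both problems at once by a different decomposition: it chooses the disjoint intervals $I_1,\dots,I_k$ greedily and groups all intervals opening inside each $I_i$, so that each piece $G_i$ is, after deleting twins (Remark~\ref{rem:twinfree}), \emph{exactly} $K^\nabla_{t,t}$ minus a vertex --- a genuinely bipartite-like triangle where every vertex plays only one role --- so Theorem~\ref{thm:Gn} applies verbatim with valency $\frac{1}{2}\log\Delta+\frac{1}{4}\log\log\Delta+O(1)$ per piece, and each vertex lies in at most two pieces. If you want to salvage your route, you would need to prove the banded-staircase covering lemma with a one-sided accounting per vertex, which in effect amounts to rediscovering the paper's interval decomposition.
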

\begin{proof}
Given a linear interval graph $G=(V,E)$,  we begin by  the leftmost interval and call it $I_1$. Then we  consider the first interval opening after closing $I_1$ and call it $I_2$. Continue the labelling  until the interval $I_k$ after which no interval opens. The intervals $I_1,\ldots,I_k$ are disjoint and since no interval contains another (see the note just before Theorem~\ref{thm:lig}), every other interval $I$ opens inside one $I_i$ and intersects at most $I_i, I_{i+1}$. 

For every  $i$, $1\leq i\leq k$, let $\mathcal{I}_i$ be the set of all intervals which are opened inside $I_i$, including $I_i$ itself. Also, define
\[V_i:=\{x\in V\ :\  x\in I, \mbox{ for some } I\in\mathcal{I}_i\},\]
and let $G_i$  be the induced subgraph of $G$ on $V_i$. Every non-isolated vertex $x\in V$ is in at least one $V_i$ and at most two $V_i$'s, because 
\begin{align*}
&I_i\cap V_j =\emptyset & \forall\ j\geq i+1, \mbox{ or } \forall\  j\leq i-2, \\
&(V_i\setminus I_i) \cap V_j=\emptyset & \forall\ j\geq i+2, \mbox{ or } \forall\ j\leq i-1.
\end{align*}
Therefore, every vertex of $G$ lives in at most two of the graphs $G_i$'s. This, along with the fact that $E(G)=\cup_i E(G_i)$, yields
\[\lcc{G}\leq 2\ \max_i \lcc{G_i}.\]
Now fix $i$ and let us compute $\lcc{G_i}$. First, note that by Remark~\ref{rem:twinfree}, $G_i$ has an induced twin-free subgraph $G'_i$, where $\lcc{G_i}=\lcc{G'_i}$. The fact that $G'_i$ is twin-free, along with the assumptions adopted just before Theorem~\ref{thm:lig}, conclude that there exists exactly one vertex  in $G'_i$ between each two consecutive opening or closing brackets of intervals (if there were two vertices, they would be twins). Accordingly, if $t$ is the number of intervals in $\mathcal{I}_i$, then $G'_i$  has $2t-1$ vertices and is isomorphic to the graph $K^\nabla_{t,t}\setminus (t+1)$, which is the graph obtained from $K^\nabla_{t,t}$ by deleting vertex $t+1$. To see this, call the vertices of $G'_i$ by $y_1,\ldots, y_{2t-1}$, in order from left to right and define an isomorphism mapping $y_i$ to $i$, for $1\leq i\leq t$, and to $i+1$, for $t+1\leq i\leq 2t-1$, respectively.

On the other hand, the vertices $t$ and $t+1$ are twins in $K^\nabla_{t,t}$. Hence, by Theorem~\ref{thm:Gn}, we have
\[\lcc{G_i}=\lcc{K^\nabla_{t,t}\setminus (t+1)}=\lcc{K^\nabla_{t,t}}\leq \frac{1}{2}\log \Delta+ \frac{1}{4}\log \log \Delta+O(1),\]
which yields the desired upper bound. 

For the lower bound, let $G$ be twin-free and define $o_i$ and $c_i$ to be the number of opening and closing brackets inside the interval $I_i$, respectively. Also let $N:=o_{i_0}=\max_i o_i$.  Since $G$ is twin-free, at most one vertex lies between each two consecutive opening or closing brackets. Therefore, for every vertex $v\in V_i$, we have
\[\deg_G(v)\leq o_{i-1} +c_{i-1} +o_{i} +c_{i} +o_{i+1} +c_{i+1}\leq 6 N. \]
The last inequality holds due to the fact that $c_i\leq o_{i-1}$, for all $i$. Let $G_{i_0}$ and $G'_{i_0}$ be the same as defined earlier. Thus, $G'_{i_0}$ is an induced subgraph of $G$ which is isomorphic to the graph $K^\nabla_{N,N}\setminus (N+1)$. Furthermore, $N \geq \Delta/6$ and therefore, by Theorem~\ref{thm:Gn},
\[\lcc{G}\geq \lcc{G'_{i_0}} \geq \frac{1}{2} \log N + \frac{1}{4} \log\log N + O(1) \geq \frac{1}{2} \log \Delta + \frac{1}{4} \log\log \Delta + O(1).\]
\end{proof}
\section{A Bollob\'as-type inequality}\label{sec:bol}
We close the paper with a result in extremal set theory. In fact, we apply a result of Section~\ref{sec:lig} to prove a new Bollob\'as-type inequality concerning the intersecting pairs of set systems.

An $(r,s)-$system is defined as a family $\mathcal{F}=\{(A_i,B_i)\ : \ i\in I\}$ of pairs of sets, such that for every $i\in I$, $|A_i|\leq r$, $|B_i|\leq s$ and $A_i\cap B_j\neq \emptyset$ if and only if $i\neq j$. A classical result of Bollob\'as \cite{bollobas} known as Bollob\'as's inequality states that for every $(r,s)-$system $\mathcal{F}$, $|\mathcal{F}|\leq \binom{r+s}{r}$.

A number of generalizations of this result have been presented in the context of extremal set theory. A remarkable extension is due to Frankl \cite{frankl}, that states a skew version of Bollob\'as's result. Let  $\mathcal{F}=\{(A_i,B_i)\ : \ 1\leq i\leq k\}$ be a family of pairs of sets, where for every $i$, $1\leq i\leq k$, $|A_i|\leq r$, $|B_i|\leq s$, also $A_i\cap B_i=\emptyset$  and for every $i,j$, $1\leq i<j\leq k $, $A_i\cap B_j\neq \emptyset$. In this case, it is proved by Frankl that $|\mathcal{F}|\leq \binom{r+s}{r}$. 

Another variant of the problem is proposed by Tuza \cite{tuza} as follows.  An $(r,s)-$weakly intersecting pairs of sets is defined as a family $\mathcal{F}=\{(A_i,B_i)\ : \ i\in I\}$, such that for every $i\in I$, $|A_i|\leq r$, $|B_i|\leq s$, $A_i\cap B_i=\emptyset$  and  for every $i\neq j$, the sets $A_i\cap B_j$ and $A_j\cap B_i$ are not both empty. As far as we know, the best known upper bound is $|\mathcal{F}|< (r+s)^{r+s} / (r^r s^s)$, due to Tuza \cite{tuza}. For a review of the other generalizations see \cite{talbot} and references therein.

Now, let us define a new variant of the problem as follows. Let $\mathcal{F}=\{(A_i,B_i)\ : \ 1\leq i\leq k\}$ be a family of pairs of sets, satisfying the following conditions.
\begin{enumerate}
\item[(i)] For every $i$, $1\leq i\leq k$, $|A_i|\leq r$, $|B_i|\leq s$.
\item[(ii)] For every $i,j$, $1\leq i,j\leq k$, $A_i\cap B_j\neq \emptyset$ if and only if $i\geq j$. 
\end{enumerate}
We are seeking for the maximum size of such a family $\mathcal{F}$. To this end, we recall the definition of the function $f(r,s)$ in the proof of Theorem~\ref{thm:Gn}. For every positive integers $r,s$, $f(r,s)$ is defined to be the maximum number $n$, such that there exists a covering of one entries of the matrix, $T_n$, defined in \eqref{eq:tn},  by rectangles (i.e. all-ones submatrices), where each row (resp. column) intersects at most $r$ (resp. $s$) of the rectangles.

We claim that $f(r,s)$ is indeed equal to the maximum possible size of a family $\mathcal{F}$ satisfying the above conditions (i) and (ii). For this, consider a  rectangle covering $\mathcal{C}$ of $T_n$, where each row (resp. column) intersects at most $r$ (resp. $s$) of rectangles. Now, for every $1\leq i,j\leq n$, let $A_i$ and $B_j$ be the sets of all rectangles in $\mathcal{C}$ intersecting row $i$ and column $j$, respectively. It is evident that $|A_i|\leq r$ and $|B_j|\leq s$. Also, one may see that $A_i\cap B_j\neq \emptyset$ if and only if  there exists a rectangle in $\mathcal{C}$ intersecting both row $i$ and column $j$ and then if and only if $T_n(i,j)=1$, i.e. $i\geq j$.

Conversely, assume that a family $\mathcal{F}=\{(A_i,B_i)\ :\ 1\leq i\leq k\}$ satisfies the above conditions (i) and (ii). Then, for every $x\in \cup_i (A_i\cup B_i)$, define  $C_x:=\{(i,j)\ :\ x\in A_i\cap B_j \}$. Thus, one may check that all these sets $C_x$ form a rectangle covering for the matrix $T_n$ and for every $1\leq i,j\leq k$, row $i$ (resp. column $j$) intersects at most $|A_i|$ (resp. $|B_j|$) of the rectangles thereby obtained.

The above arguments show that the maximum possible size of a family $\mathcal{F}$ fulfilling the conditions (i) and (ii) is indeed equal to $f(r,s)$.  The explicit formula for $f(r,s)$ has been obtained in \eqref{eq:frs}. This enables us to state the following result which is an extremal inequality analogous to Bollob\'as's inequality regarding a pair of set systems.
\begin{cor}
If $\mathcal{F}=\{(A_i,B_i)\ : \ 1\leq i\leq k\}$ be a family of pairs of sets fulfilling the conditions (i) and (ii), then $|\mathcal{F}|\leq \binom{r+s-1}{r-1}+\binom{r+s-2}{r}$. Also, the upper bound is tight.
\end{cor}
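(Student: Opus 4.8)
The plan is to leverage the two-way correspondence established in the paragraphs immediately preceding this corollary, which already shows that the maximum size of a family $\mathcal{F}$ satisfying conditions (i) and (ii) is precisely the function $f(r,s)$ introduced in the proof of Theorem~\ref{thm:Gn}. Given this, the corollary is essentially a restatement of the explicit formula for $f(r,s)$, and almost no new work is required.

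First I would recall the direction from families to coverings. Starting from any family $\mathcal{F}=\{(A_i,B_i) : 1\leq i\leq k\}$ obeying (i) and (ii), one assigns to each label $x\in\bigcup_i(A_i\cup B_i)$ the set $C_x:=\{(i,j) : x\in A_i\cap B_j\}$. Condition (ii) guarantees that each $C_x$ is a rectangle (all-ones submatrix) of $T_k$, and these rectangles cover every one-entry of $T_k$, since $A_i\cap B_j\neq\emptyset$ holds exactly when $i\geq j$. The size bounds in (i) translate directly into the statement that row $i$ meets at most $|A_i|\leq r$ and column $j$ meets at most $|B_j|\leq s$ of these rectangles. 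Hence $k=|\mathcal{F}|$ cannot exceed the largest $n$ admitting such a covering of $T_n$, that is $|\mathcal{F}|\leq f(r,s)$. Substituting the closed form obtained in \eqref{eq:frs}, namely $f(r,s)=\binom{r+s-1}{r-1}+\binom{r+s-2}{r}$, yields the claimed inequality.

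For tightness, I would run the reverse direction of the dictionary. Take an optimal rectangle covering $\mathcal{C}$ of $T_{f(r,s)}$ with row valency at most $r$ and column valency at most $s$, which exists by the very definition of $f(r,s)$, and set $A_i$ (resp. $B_j$) to be the collection of rectangles in $\mathcal{C}$ meeting row $i$ (resp. column $j$). As verified in the preceding discussion, this produces a family of size $f(r,s)$ fulfilling (i) and (ii), so the upper bound is attained.

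Since all the substantive content—the bijective translation between families and rectangle coverings, together with the generating-function computation of $f(r,s)$—is already in place, there is no genuine obstacle here. The only point that warrants care is checking that both directions of the translation preserve the size and valency constraints simultaneously, which is exactly what conditions (i) and (ii) were engineered to ensure; in particular, the ``if and only if'' in (ii) is what pins the covering to $T_n$ rather than to a larger zero--one matrix.
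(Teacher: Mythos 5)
Your proposal is correct and follows essentially the same route as the paper: the text immediately preceding the corollary establishes exactly this two-way dictionary (labels $x\mapsto C_x$ giving $|\mathcal{F}|\leq f(r,s)$, and an optimal covering of $T_{f(r,s)}$ giving tightness), and the corollary then follows by substituting the closed form \eqref{eq:frs}. No gaps; your one point of care—that the ``if and only if'' in (ii) pins the covered matrix to $T_n$—is indeed the crux and is handled correctly.
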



\begin{thebibliography}{10} 

\bibitem{ajtai80}
M.~Ajtai, J.~Koml{\'o}s, and E.~Szemer{\'e}di.
\newblock A note on {R}amsey numbers.
\newblock {\em J. Combin. Theory Ser. A}, 29(3):354--360, 1980.

\bibitem{ajtai81}
M.~Ajtai, J.~Koml{\'o}s, and E.~Szemer{\'e}di.
\newblock A dense infinite {S}idon sequence.
\newblock {\em European J. Combin.}, 2(1):1--11, 1981.

\bibitem{berg89}
C.~Berge.
\newblock {\em Hypergraphs}, volume~45 of {\em North-Holland Mathematical
Library}.
\newblock North-Holland Publishing Co., Amsterdam, 1989.
\newblock Combinatorics of finite sets, Translated from the French.

\bibitem{bermond}
J.-C. Bermond and J.~C. Meyer.
\newblock Graphe repr\'esentatif des ar\^etes d'un multigraphe.
\newblock {\em J. Math. Pures Appl. (9)}, 52:299--308, 1973.

\bibitem{bollobas}
B.~Bollob\'as.
\newblock On generalized graphs.
\newblock {\em Graphs  Acta Math. Acad. Sci. Hungar.}, 16:447–-452, 1965.


\bibitem{seymour}
M.~Chudnovsky and P.~Seymour.
\newblock The structure of claw-free graphs.
\newblock In {\em Surveys in combinatorics 2005}, volume 327 of {\em London
Math. Soc. Lecture Note Ser.}, pages 153--171. Cambridge Univ. Press,
Cambridge, 2005.

\bibitem{seymourcol}
M.~Chudnovsky and P.~Seymour.
\newblock Claw-free graphs {VI.} colouring.
\newblock {\em J. Comb. Theory Ser. B}, 100(6):560--572, November 2010.

\bibitem{dong}
J.~Dong and Y.~Liu.
\newblock On the decomposition of graphs into complete bipartite graphs.
\newblock {\em Graphs Combin.}, 23:255–--262, 2007.

\bibitem{erdos}
P.~Erd{\H{o}}s and L.~Pyber.
\newblock Covering a graph by complete bipartite graphs.
\newblock {\em Discrete Math.}, 170(1-3):249--251, 1997.


\bibitem{frankl}
P. Frankl.
\newblock An extremal problem for two families of sets.
\newblock {\em European J. Combin.}, 3:125--127, 1982.


\bibitem{kneser}
P.~Hamburger, A.~Por, and M.~Walsh.
\newblock Kneser representations of graphs.
\newblock {\em SIAM J. Discrete Math.}, 23(2):1071--1081, 2009.


\bibitem{jukna1}
S.~Jukna.
\newblock On set intersection representations of graphs.
\newblock {\em J. Graph Theory}, 61(1):55--75, 2009.

\bibitem{jukna2}
S.~Jukna and A.~S. Kulikov.
\newblock On covering graphs by complete bipartite subgraphs.
\newblock {\em Discrete Math.}, 309(10):3399--3403, 2009.

\bibitem{kim}
J.~H. Kim.
\newblock The {R}amsey number {$R(3,t)$} has order of magnitude {$t^2/\log t$}.
\newblock {\em Random Structures Algorithms}, 7(3):173--207, 1995.

\bibitem{lehot}
P. G.~H. Lehot.
\newblock An optimal algorithm to detect a line graph and output its root
graph.
\newblock {\em J. Assoc. Comput. Mach.}, 21:569--575, 1974.

\bibitem{lovasz}
L.~Lov{\'a}sz.
\newblock {\em Problem 9: Beitr\"age zur {G}raphentheorie und deren
{A}nwendungen}.
\newblock Technische Hochschule Ilmenau, 1977.
\newblock International Colloquium held in Oberhof.

\bibitem{mckee}
T.~A. McKee and F.~R. McMorris.
\newblock {\em Topics in intersection graph theory}.
\newblock SIAM Monographs on Discrete Mathematics and Applications. Society for
Industrial and Applied Mathematics (SIAM), Philadelphia, PA, 1999.

\bibitem{nilli}
A.~Nilli.
\newblock Triangle-free graphs with large chromatic numbers.
\newblock {\em Discrete Math.}, 211(1-3):261--262, 2000.

\bibitem{omoomi}
B.~Omoomi, A.~Ghiasian, and H.~Saiedi.
\newblock A generalization of line graph via link scheduling in wireless
networks.
\newblock personal communication.

\bibitem{orlin}
J.~Orlin.
\newblock Contentment in graph theory: covering graphs with cliques.
\newblock {\em Nederl. Akad. Wetensch. Proc. Ser. A {\bf 80}=Indag. Math.},
39(5):406--424, 1977.

\bibitem{poljak}
S.~Poljak, V.~R{\"o}dl, and D.~Turz{\'{\i}}k.
\newblock Complexity of representation of graphs by set systems.
\newblock {\em Discrete Appl. Math.}, 3(4):301--312, 1981.

\bibitem{pullman}
N.~J. Pullman.
\newblock Clique coverings of graphs---a survey.
\newblock In {\em Combinatorial mathematics, {X} ({A}delaide, 1982)}, volume
1036 of {\em Lecture Notes in Math.}, pages 72--85. Springer, Berlin, 1983.

\bibitem{shearer}
J.~B. Shearer.
\newblock A note on the independence number of triangle-free graphs.
\newblock {\em Discrete Math.}, 46(1):83--87, 1983.

\bibitem{skums}
P.V. Skums, S.V. Suzdal, and R.I. Tyshkevich.
\newblock Edge intersection graphs of linear 3-uniform hypergraphs.
\newblock {\em Discrete Math.}, 309(11):3500--3517, 2009.


\bibitem{talbot}
J.~Talbot. A new Bollob\'as-type inequality and applications to $t$-intersecting families of sets.
\newblock Covering a graph by complete bipartite graphs.
\newblock {\em Discrete Math.}, 285:349--353, 2004.

\bibitem{tuza}
Zs.~Tuza.
\newblock Inequalities for two set systems with prescribed intersections.
\newblock {\em Graphs Combin.}, 3:75-–80, 1987.

\bibitem{watts}
V.~L. Watts.
\newblock {\em Covers and partitions of graphs by complete bipartite
subgraphs}.
\newblock ProQuest LLC, Ann Arbor, MI, 2001.
\newblock Thesis (Ph.D.)--Queen's University (Canada).

\end{thebibliography}
%

\end{document}